\documentclass[final,leqno]{siamltex}
\usepackage{amsmath}
\usepackage{amssymb}
\usepackage{array}
\usepackage{array}
\usepackage{graphicx}
\usepackage{tikz,ulem}
\usepackage{dsfont}
\usepackage{subfigure,url}

\usepackage{algpseudocode,algorithm}

\newcommand{\K}{\mathcal{K}}
\newcommand{\R}{\mathbb{R}}

\newcommand{\mt}{\color{red}}

\newtheorem{remark}{Remark}

\title{Rosenbrock-Krylov Methods for Large Systems of Differential Equations}

\author{Paul Tranquilli \footnotemark[2] \footnotemark[4] 
\and Adrian Sandu \footnotemark[3] \footnotemark[4]}

\begin{document}
\thispagestyle{empty}
\setcounter{page}{0}

\begin{Huge}
\begin{center}
Computational Science Laboratory Technical Report CSL-TR-1-2013\\
\today
\end{center}
\end{Huge}
\vfil
\begin{huge}
\begin{center}
Paul Tranquilli and Adrian Sandu
\end{center}
\end{huge}

\vfil
\begin{huge}
\begin{it}
\begin{center}
``Rosenbrock-Krylov Methods for Large Systems of Differential Equations''
\end{center}
\end{it}
\end{huge}
\vfil
\textbf{Cite as:} Paul Tranquilli and Adrian Sandu.  Rosenbrock-Krylov methods for large systems of differential equations. SIAM Journal of Scientific Computing. Volume 36, Issue 3, Pages A1313 -- A1338, 2014.
\vfil

\begin{large}
\begin{center}
Computational Science Laboratory \\
Computer Science Department \\
Virginia Polytechnic Institute and State University \\
Blacksburg, VA 24060 \\
Phone: (540)-231-2193 \\
Fax: (540)-231-6075 \\ 
Email: \url{sandu@cs.vt.edu} \\
Web: \url{http://csl.cs.vt.edu}
\end{center}
\end{large}

\vspace*{1cm}

\begin{tabular}{ccc}
\includegraphics[width=2.5in]{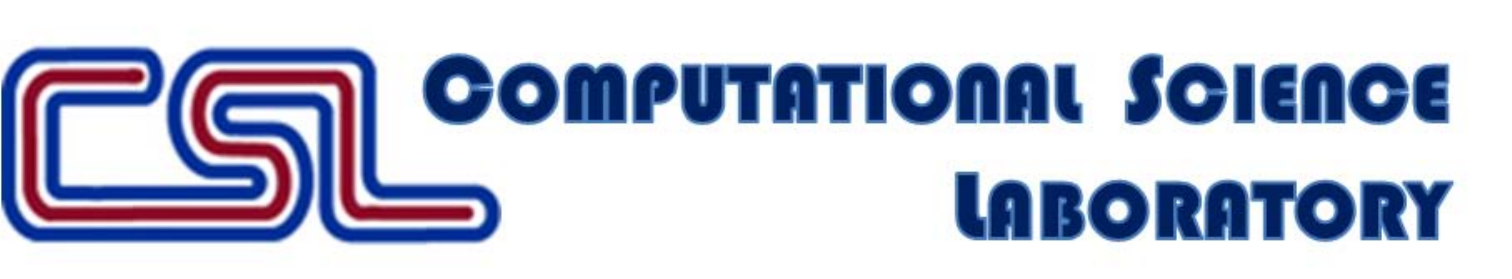}
&\hspace{2.5in}&
\includegraphics[width=2.5in]{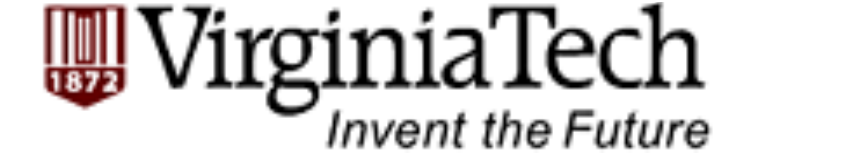} \\
{\bf\em Innovative Computational Solutions} &&\\
\end{tabular}

\newpage

\maketitle


\renewcommand{\thefootnote}{\fnsymbol{footnote}}
\footnotetext[2]{E-mail: ptranq@vt.edu}
\footnotetext[3]{E-mail: sandu@cs.vt.edu}
\footnotetext[4]{Computational Science Laboratory, Department of Computer Science, Virginia Tech.  2202 Kraft Drive, Blacksburg, Virginia 24060.}
\renewcommand{\thefootnote}{\arabic{footnote}}

\begin{abstract}
   This paper develops a new class of Rosenbrock-type integrators based on a Krylov space solution of the linear systems. The new family, called Rosenbrock-Krylov (Rosenbrock-K), is well suited for solving large-scale systems of ODEs or semi-discrete PDEs.  The time discretization and the Krylov space approximation are treated as a single computational process, and the
Krylov space properties are an integral part of the new Rosenbrock-K order condition theory developed herein. Consequently, Rosenbrock-K methods require a small number of basis vectors determined solely by the temporal order of accuracy. The subspace size is independent of the ODE under consideration, and there is no need to monitor the errors in linear system solutions at each stage. Numerical results show favorable properties of Rosenbrock-K methods when compared to current Rosenbrock and Rosenbrock-W schemes. 
\end{abstract}
\pagestyle{myheadings}
\thispagestyle{plain}

\section{Introduction}
This paper is concerned with the numerical solution of large initial value problems of the form
\begin{equation}
\label{eqn:ode}
 \frac{dy}{dt} = f(t,y)\,,~~~ t_0 \leq t \leq t_F\,, ~~~ y(t_0) = y_0\,; \quad  y(t), f(t,y) \in \R^N\,.
\end{equation}
Ordinary differential equations (ODEs) model the evolution of chemical kinetic systems, electronic circuits, or the motion of planets.
Other physical systems, such as dynamics of the atmosphere and oceans, or the behavior of materials, require the solution of a system of partial differential equations (PDEs). A standard approach for solving evolutionary PDEs is to discretize in space, reducing the problem to a large system of ODEs which are then integrated forward in time.

The equations of interest in many simulations are driven by multiple physical processes, e.g., associated 
with the simulation of fluid-structure interaction, sub-grid-scale physics, or
chemical reaction terms. These processes have different dynamical characteristics, with some being slow
(e.g., transport) and some fast (e.g., stiff chemistry).
In addition, multiple spatial and temporal scales are associated with non-uniform  meshes, with boundary layers, 
with fast waves, and with the structure of the system  (e.g., the existence of jet fans).
The existence of fast and slow dynamics poses considerable challenges to the solution of the semi-discrete PDEs 
\eqref{eqn:ode} by explicit time stepping methods.
Specifically, due to the Courant-Friedrichs-Lewy (CFL) stability condition, the largest allowable step sizes 
are bounded above by the shortest (fastest) time scale in the system.

To avoid stability restrictions on the step size, implicit time integration methods are becoming widely used in 
the simulation of large-scale evolutionary PDEs  \eqref{eqn:ode}.   
Implicit time stepping requires the solution of large nonlinear systems of equations, coupling all variables in the model, 
at each time step. The associated computation and communication costs
constitute a major scalability bottleneck at best, and can quickly become infeasible for large systems. 

In this paper we examine, and extend, the Rosenbrock class of integration methods.  These are linearly-implicit methods which enjoy the benefit of requiring a fixed number of linear solves, as opposed to non-linear solves in the case of implicit Runge-Kutta methods.  A general $s$-stage Rosenbrock method reads \cite[Section IV.7]{Hairer_book_II}

\begin{subequations}
\label{eqn:ros}
\begin{eqnarray}
    		y_{n+1} &=& y_n + \displaystyle\sum_{i=1}^s b_i\, k_i \,,
                \label{eqn:ros-solution}  \\
    		\qquad\left(\mathbf{I}_{N \times N} - h\, \gamma_{i,i}\, \mathbf{A}_n \right) \cdot k_i &=& h\,f\left(t_n + \alpha_i h, y_n+\displaystyle\sum_{j=1}^{i-1}\alpha_{i,j} \,k_j \right) 
		\label{eqn:ros-stage} \\
\nonumber && + h\,\mathbf{A}_n\, \displaystyle\sum_{j=1}^{i-1} \gamma_{i,j}\, k_j  + h^2\, \gamma_i\, f_t(t_n,y_n)	
		\,, ~~  i=1,\dots,s\,.
\end{eqnarray}
\end{subequations}

Here $y_n \approx y(t_n)$ and $y_{n+1} \approx y(t_{n+1})$ are the numerical solutions computed by the method, and $h=t_{n+1}-t_n$ is the current time step.
The term $f_t = \partial f / \partial t$ is the partial time derivative of $f$ evaluated at the beginning of the current time step.
The method coefficients $\alpha_{i,j}$, $\gamma_{i,j}$, and $b_i$ are chosen such as to ensure the desired accuracy and stability properties. For convenience of notation one also defines
\[
\alpha_i = \sum_{j=1}^{i-1} \alpha_{i,j}\,, \quad \gamma_i = \sum_{j=1}^{i} \gamma_{i,j}\,, \quad \beta_{i,j} = \alpha_{i,j} + \gamma_{i,j}\,.
\]
In classical Rosenbrock methods the matrix $\mathbf{A}_n = \mathbf{J}_n \in \R^{N\times N}$, where  
\[
\mathbf{J}_n = f_y(t_n,y_n)
\]
is the Jacobian of $f$ evaluated at the beginning of the current time step. The vectors $k_i$ are the intermediate stage values; each of them is computed by solving an $N \times N$ linear system.
Typically all coefficients are chosen equal to each other, $\gamma_{i,i} = \gamma$ for $ i=1,\dots,s$, such that all stages share the same LU decomposition.
The order conditions of classical Rosenbrock methods rely on the assumptions that $\mathbf{A}_n$ is the exact ODE Jacobian, and that each of the linear systems \eqref{eqn:ros-stage} is solved exactly.

For many systems the exact Jacobian $\mathbf{J}_n$ can be both costly and difficult to obtain, e.g., due to the size of the application and the use of complex spatial discretization schemes.  
The class of Rosenbrock-W methods \cite[Section IV.7]{Hairer_book_II} has been derived for such situations. They have the same form \eqref{eqn:ros},
but the coefficients are selected such that the overall discretization order is preserved for any matrix $\mathbf{A}_n$, i.e., for arbitrary approximations of the Jacobian \cite{Rang_2005_ROW3}. 
The role of the matrix is to ensure numerical stability, and its choice dictates the type and amount of implicitness used in \eqref{eqn:ros}.  

Solving for the stage values $k_i$ \eqref{eqn:ros-stage} is the most expensive part of the integrator.  For large-scale systems direct methods such as $LU$ decomposition are not feasible, and iterative Krylov based methods, such as GMRES \cite{Saad,vanderVorst}, have been considered in the literature \cite{Weiner_1998_order,Wensch_2005_DAE,Weiner_1995_krylovW}.
Krylov based methods use only Jacobian-vector products and do not require storage, or even knowledge of, the full Jacobian.

The use of Krylov(-Newton) methods is a natural approach to speed up 
the solutions of the  linear/nonlinear systems of equations in implicit time integration \cite{Kelley_2004_Krylov,Keyes_2004_JFNK}.
Such integration methods have been successfully employed in real-life applications \cite{Grotowskya_1996,Schulze_2008_exp,Evans_2007_JFNK-SIMPLE}.
Krylov space solvers have been used in the implementation of implicit Runge-Kutta \cite{Jay_1999_SPARK,Dekker_2009_Krylov,Bouhamidi_2011_Krylov},
implicit linear multistep \cite{Hosseini_1999_MEBDF,Hindmarsh_2002_VODPK,Evans_2007_JFNK-SIMPLE},
and deferred correction \cite{Minion_2005_SD-Krylov} methods. Software for solving stiff ODEs and DAEs with this approach 
include  LSODKR \cite{Hindmarsh_1991_LSODKR}, LSODPK  \cite{Hindmarsh_1991_LSODPK},
VODPK \cite{Hindmarsh_2002_VODPK}, and DASPK \cite{Brown_1994_DASPK}.
In addition, Krylov space methods have been used for the exact integration of linear ODEs with source terms \cite{Botchev_2011_Krylov}, 
and to improve stability \cite{Botchev_2001_Krylov}.
Krylov space techniques have been used to accelerate convergence of deferred correction methods \cite{Minion_2006_accelerated,Minion_2007_dae}.

Of particular interest in this work is the use of Krylov methods in the context of Rosenbrock time integrators \eqref{eqn:ros}.
Classical Rosenbrock integrators are poor matrix free methods due to the explicit presence of the exact Jacobian matrix, and the approximate nature of Krylov based methods.  Rosenbrock-W methods are better suited for coupling with Krylov based solvers.
A family of matrix-free Rosenbrock methods, named Krylov-ROW, has been proposed in the 1990's  \cite{Weiner_1995_krylovW,Schmitt_1995_krylovW,Podhaisky_1997_krylovW,Weiner_1997_rowmap,Strehmel_1991_ROW-stiff}.
The control of linear solution accuracy in each stage is discussed in
\cite{Schmitt_1995_krylovW}, and preconditioning in \cite{Schmitt_1998_precondition}.
Order results for Krylov-ROW methods are studied in \cite{Weiner_1995_krylovW,Weiner_1998_order}. A multiple Arnoldi process
is proposed, where the Krylov space is enriched at each stage, such that the information
from previous stages is reused, and all the right hand side vectors belong to the subspace.
The order of the underlying Rosenbrock method is preserved under modest requirements on the Krylov space size, and
independent of the dimension of the ODE system. The implementation of methods of order four is done in the code ROWMAP \cite{Weiner_1997_rowmap},
where the error estimation and step size selection strategies are
inherited from the underlying Rosenbrock method.
The application of Krylov-ROW methods to index-1 DAEs \cite{Wensch_2005_DAE} reveals
that the Krylov space dimension needs to exceed the number of algebraic variables.
Krylov-ROW methods are therefore attractive in the case where the number of algebraic constraints is small
compared to the number of differential equations; or, by extension, where the dimension of
the stiff subspace is small. 
Novati \cite{Novati_2008_secantROW} presents a class of W-methods where the Jacobian is approximated using quasi-Newton-like
rank one updates, based on solution and function values in previous time steps. 
Periodic restarts are needed for stability, as the Jacobian approximation deteriorates with time.
A related family of methods are exponential integrators  \cite{Hochbruck_1997_exp}, which use matrix exponentials of the Jacobian as part
of the solution process, and evaluate matrix exponential times vector products via Krylov space methods \cite{Hochbruck_2009_exp,Caliaria_2009_implementation,Tokman_2006_EPI,Hochbruck_1998_exp,Hochbruck_2010_exp}.

In this paper we develop a new family of Rosenbrock-Krylov time stepping methods characterized by the lowest possible degree of implicitness that ensures stability. The new algorithms  are implicit in only the stiff subspace, which is captured by a Krylov space. Moreover, 
they perform only scalable operations such as Jacobian-vector products, and solve only small linear systems at each step.
A naive implementation of a matrix free Rosenbrock-W method requires construction of a Krylov subspace for each stage equation.  
The Rosenbrock-K methods proposed herein extend the framework of Rosenbrock-W methods, accounting for the Krylov subspace approximation of the linear system when constructing the order conditions of the integrator.  In this way we substantially reduce the number of required order conditions for a given order, thereby reducing the number of necessary stages of the method. 
The Rosenbrock-K methods require the construction of only a single Krylov subspace for the solution of all stage values. The dimension of this subspace need only be as large as the desired order of the method to ensure accuracy. 
%

The new class of Rosenbrock-K methods differs from existing Rosenbrock schemes in several important aspects.  
Rosenbrock-K methods use approximate Jacobians similar to Rosenbrock-W family.
The Jacobian approximations have to be based on a Krylov subspace approach, similar to the Krylov-ROW family.
The primary benefit of Rosenbrock-K methods over Krylov-ROW stems from the integration of the Krylov space properties into the Rosenbrock-K order condition theory; this elegant approach ensures the desired orders of accuracy with much smaller subspaces than those required by the Krylov-ROW approach.  More importantly, a single Krylov space is used across all stages, and most operations are performed in this reduced space.  The implementation of Rosenbrock-K is thus much simpler, and considerably more scalable, than the implementation of Krylov-ROW, where the subspace needs to be extended at each subsequent stage.

The paper is laid out as follows: in Section \ref{sec:RK-methods} we present the framework of the proposed class of methods as well as the Krylov subspace approximation of the Jacobian used, in Section \ref{sec:orderconds} we extend the theory of order trees for Rosenbrock-w methods to our new Rosenbrock-Krylov methods as well as give details of how to construct these trees and the method order conditions from them, in Section \ref{sec:solvingorderconds} we construct two new Rosenbrock-Krylov integrators and outline a method for the solution of the order conditions to derive specific method coefficients, and in Section \ref{sec:numericalresults} we present some numerical results.

\section{Formulation of Rosenbrock-Krylov methods}\label{sec:RK-methods}

%
%

Rosenbrock-Krylov methods have the same form as Rosenbrock-W methods \eqref{eqn:ros}, but use a particular approximation $\mathbf{A}_n$ of $\mathbf{J}_n$.
We start the presentation with the case of autonomous systems.

Specifically, let $f_n=f(y_n)$ and consider the $M$-dimensional Krylov space
\begin{eqnarray}
\label{eqn:krylovspace}
	\K_M(\mathbf{J}_n\,,f_n) &=& \textnormal{span}\left\{\, f_n, \, \mathbf{J}_n\,f_n, \, \mathbf{J}_n^2\,f_n, \dots, \mathbf{J}_n^{M-1}\,f_n\, \right\} \\
\nonumber
	&=& \textnormal{span}\left\{v_1, v_2,  \dots, v_M\right\} \,.
\end{eqnarray}
An orthogonal basis $\{ v_i\}_{i=1,\dots,M}$ for $\K_M$ is constructed using a modified Arnoldi process \cite{vanderVorst}.
The Arnoldi iteration returns two valuable pieces of information: a matrix $\mathbf{V}_{n;M}$ whose columns are the orthonormal basis vectors of $\K_M$, and an upper Hessenberg matrix $\mathbf{H}_{n;M}$, such that 
\begin{equation}
\label{eqn:htoj}
\mathbf{V}_{n;M} = \left[ v_1,\dots v_M \right] \in \R^{N \times M}\,; \quad  
	\mathbf{H}_{n;M} = \mathbf{V}_{n;M}^T \, \mathbf{J}_n \, \mathbf{V}_{n;M} \in \R^{M \times M}\,.
\end{equation}

{\it The Rosenbrock-Krylov matrix $\mathbf{A}_n$ is the restriction of the full ODE Jacobian to the Krylov space:}
\begin{equation}
\label{eqn:ROK-approximation}
	\mathbf{A}_n = \mathbf{V}_{n;M}\, \mathbf{H}_{n;M}\,  \mathbf{V}_{n;M}^T =  \mathbf{V}_{n;M}\, \mathbf{V}_{n;M}^T\, \mathbf{J}_n\,  \mathbf{V}_{n;M}\,  \mathbf{V}_{n;M}^T\,.
\end{equation}
To obtain the stage vector $k_i$ we decompose it 
in the component residing in $\K_M$, and the component orthogonal to $\K_M$ 
\begin{equation}
\label{eqn:k-decomposition}
k_i = \underbrace{\mathbf{V}_{n;M}\, \lambda_i}_{\in \K_M} + \underbrace{\mu_i}_{\in \K_M^\perp} \,,  
\end{equation}
where the new vectors $\lambda_i$ and $\mu_i$ are defined by 
\[
\lambda_i = \mathbf{V}_{n;M}^T\, k_i \in \R^M\,, \quad  \mu_i = \left(\mathbf{I}_{N \times N}-\mathbf{V}_{n;M}\, \mathbf{V}_{n;M}^T\right)\, k_i \in \R^N\,.  
\]
We consider also the projections of the function values in \eqref{eqn:ros-stage} onto the Krylov space
\begin{equation*}
	F_i = f\left( y_n + \displaystyle\sum_{j=1}^{i-1}\alpha_{i,j}k_j\right)\,, \quad  \phi_i = \mathbf{V}_{n;M}^T\, F_i \in \R^M\,.
\end{equation*}

To construct a Rosenbrock-K method the Jacobian approximation \eqref{eqn:ROK-approximation}  and the decomposition \eqref{eqn:k-decomposition} 
are used in the stage formulation \eqref{eqn:ros-stage} to obtain
\[
\left(\mathbf{I}_{N \times N} - h\, \gamma\, \mathbf{V}_{n;M}\, \mathbf{H}_{n;M}\, \mathbf{V}_{n;M}^T \right) \cdot \left(\mathbf{V}_{n;M}\lambda_i + \mu_i\right)  = h\,F_i + h\,\mathbf{V}_{n;M}\, \mathbf{H}_{n;M}\, \mathbf{V}_{n;M}^T\, \displaystyle\sum_{j=1}^{i-1} \gamma_{i,j}\,\left(\mathbf{V}_{n;M}\lambda_j + \mu_j\right) \,.
\]
Using the facts that $\mathbf{V}_{n;M}^T\mathbf{V}_{n;M} = \mathbf{I}_{M \times M}$ and $\mathbf{V}_{n;M}^T\mu_i = 0$ the equation can be written as
\begin{eqnarray}
\label{eqn:ROK-stage-decomposed}
\mu_i + \mathbf{V}_{n;M}\left( \mathbf{I}_{M \times M} - h\, \gamma\, \mathbf{H}_{n;M} \right)\, \lambda_i 
&=& h\,F_i + h\,\mathbf{V}_{n;M}\, \mathbf{H}_{n;M}\, \displaystyle\sum_{j=1}^{i-1} \gamma_{i,j}\, \lambda_j \,.
\end{eqnarray}
Multiplying both sides of \eqref{eqn:ROK-stage-decomposed} by $\mathbf{V}_{n;M}^T$ leads to
the following reduced stage equation 
\begin{equation}
\label{eqn:reduced-stage-lambda}
	\left( \mathbf{I}_{M \times M} - h\gamma \mathbf{H}_{n;M} \right) \lambda_i = h\phi_i + h \mathbf{H}_{n;M}\displaystyle\sum_{j=1}^{i-1} \gamma_{i,j}\lambda_j\,.
\end{equation}
Multiplying both sides of \eqref{eqn:ROK-stage-decomposed} by $\mathbf{I}_{N \times N}- \mathbf{V}_{n;M}\mathbf{V}_{n;M}^T$ gives
\begin{equation}
\label{eqn:reduced-stage-mu}
\mu_i = h\,\left( F_i - \mathbf{V}_{n;M}\, \phi_i\right)\,.
\end{equation}
The system \eqref{eqn:reduced-stage-lambda} is of size $M \times M$, where $M \ll N$, and can be inverted through the use of a direct method.  The full stage values can now be recovered from \eqref{eqn:k-decomposition}, \eqref{eqn:reduced-stage-lambda}, and \eqref{eqn:reduced-stage-mu} as
\begin{equation}
\label{eqn:ROK-full-stage}
	k_i = \mathbf{V}_{n;M}\, \lambda_i + h\, \left(F_i - \mathbf{V}_{n;M}\phi_i\right) \,.
\end{equation}
%

We next consider the formulation of Rosenbrock-K methods for non-autonomous problems \eqref{eqn:ode}.  
With the extended state and function
\begin{equation}
\label{eqn:naextend}
	\widehat{y}(t) = \left[\begin{array}{c} y(t) \\ t \end{array}\right] \in \R^{N+1}\,, \quad  \widehat{f}(\widehat{y}) = \left[ \begin{array}{c} f(t,y) \\ 1 \end{array}\right]  \in \R^{N+1}\,,
\end{equation}
the general ODE \eqref{eqn:ode} can be formulated  as autonomous system
\begin{equation}
\label{eqn:autode}
	\frac{d\widehat{y}}{dt} = \widehat{f}(\widehat{y})\,, \quad  \widehat{y}(t_0) = \left[ \begin{array}{c} y \\ t_0 \end{array} \right], \quad  t_0 \leq t \leq t_F, \quad  \widehat{y}(t) \, , \widehat{f}(\widehat{y})\,.
\end{equation} 
Non-autonomous Rosenbrock-K integrators are constructed using this technique.  The Jacobian of the extended right hand side function is
\begin{equation}
\label{eqn:jacextend}
	\widehat{\mathbf{J}} = \left[ \begin{array}{cc} f_y & f_t \\ 0 & 0 \end{array} \right].
\end{equation}
An extended Krylov space $\K_M(\widehat{\mathbf{J}}_n\,,\widehat{f}_n)$ is constructed using matrix-vector products of the form
\begin{equation}
\label{eqn:mfjvp}
\widehat{\mathbf{J}}_n \begin{bmatrix} \zeta^T ~ \xi \end{bmatrix}^T = \begin{bmatrix} \mathbf{J}_n \, \zeta + f_t(t_n,y_n)\, \xi \\ 0 \end{bmatrix}\,,
\quad \zeta \in \R^N\,,\quad \xi \in \R\,.
\end{equation}
The extended $(N+1)$-dimensional Arnoldi iterations produce the matrices $\widehat{\mathbf{V}}_{n;M}$ and $\mathbf{H}_{n;M}$ such that
\begin{equation}
\label{eqn:HVw}
\widehat{\mathbf{V}}_{n;M} = \begin{bmatrix} \mathbf{V}_{n;M} \\ w^T  \end{bmatrix} \in \R^{(N+1) \times M}\,, \quad 
\mathbf{V}_{n;M} \in \R^{N \times M}\,, \quad w \in \R^M\,,
\end{equation}
and
\[
	\mathbf{H}_{n;M} = \widehat{\mathbf{V}}_{n;M}^T \, \widehat{\mathbf{J}}_n \, \widehat{\mathbf{V}}_{n;M}
	= \mathbf{V}_{n;M}^T \, \mathbf{J}_n \, \mathbf{V}_{n;M} + \mathbf{V}_{n;M}^T \, f_t(t_n,y_n) \, w^T \in \R^{M \times M}\,.
\]
This modified Arnoldi iteration proceeds as follows. 
\begin{algorithm}[ht]
\caption{Modified Arnoldi iteration}
\begin{algorithmic}[l]
   \State $\beta = \left\Vert \begin{bmatrix} f_n^T ~ 1 \end{bmatrix}^T \right\Vert~$; $\quad w_1 = 1/\beta$; $\quad  v_1 = f_n/\beta~$.
   \For{$i=1,\dots,M$}
       
       \State $\displaystyle \zeta =  \mathbf{J}_n \, v_i + f_t(t_n,y_n)\, w_i$  ; $\quad$
        $\displaystyle \xi    = 0$ ; $\quad$
        $\displaystyle \tau = \Vert \zeta \Vert$ 
      \For{$j = 1,\dots,i$} 
         \State $H_{j,i} = \left\langle \zeta , v_j \right\rangle + \xi\, w_j~ $; $\quad$
                   $\displaystyle \zeta = \zeta  - H_{j,i}\, v_j$; $\quad$
                   $\displaystyle \xi    = \xi -  H_{j,i}\, w_j $
      \EndFor
      \If{ $\left\Vert \begin{bmatrix} \zeta^T ~ \xi \end{bmatrix}^T \right\Vert / \tau \leq \kappa$}
         \For{$j = 1,\dots,i$}
            \State $\displaystyle \rho = \left\langle \zeta, v_j \right\rangle + \xi\, w_j $; $\quad$
                      $\displaystyle \zeta = \zeta - \rho\, v_j$; $\quad$
                      $\displaystyle \xi = \xi - \rho\, w_j$; $\quad$
                      $\displaystyle H_{j,i} = H_{j,i} + \rho$
         \EndFor
      \EndIf
      \State $\displaystyle H_{i+1,i} = \left\Vert \begin{bmatrix} \zeta^T ~ \xi \end{bmatrix}^T \right\Vert$; $\quad$
                $\displaystyle v_{i+1} = \zeta/H_{i+1,i}$; $\quad$
                $\displaystyle w_{i+1} = \xi/H_{i+1,i}$; $\quad$
   \EndFor
\end{algorithmic}
\end{algorithm}

The non-autonomous Rosenbrock-K integrator is obtained by applying the autonomous step (described above) to the extended system \eqref{eqn:autode}, and decoupling the state and time variables. The procedure is summarized in Algorithm \ref{ROK-nonautonomous-step}. The autonomous version is obtained by letting $w=0$.

\begin{algorithm}[ht]
\caption{One step of a non-autonomous Rosenbrock-K integrator}\label{ROK-nonautonomous-step}
\begin{algorithmic}[1]
   \State Compute $\mathbf{H}_{n;M}$, $\mathbf{V}_{n;M}$, and $w$ \eqref{eqn:HVw} using the $(N+1)$-dimensional Arnoldi process.
   \For{$i = 1,\dots,s$}\Comment{For each stage, in succession}
      \begin{eqnarray*}
       F_i &=& f\left( t_n + \alpha_i h\,,\, y_n + \displaystyle\sum_{j=1}^{i-1}\alpha_{i,j}k_j\right) \\
       \phi_i &=& \mathbf{V}_{n;M}^T\, F_i + w \\
      \lambda_i &=& \left( \mathbf{I}_{M \times M} - h\gamma \mathbf{H}_{n;M}\right)^{-1}\, \left( h\phi_i + h \mathbf{H}_{n;M} \displaystyle\sum_{j=1}^{i-1} \gamma_{i,j} \lambda_j\right) \\
       k_i &=& \mathbf{V}_{n;M}\, \lambda_i + h\, (F_i - \mathbf{V}_{n;M}\, \phi_i)
     \end{eqnarray*}
   \EndFor
   \State $y_{n+1} = y_n + \displaystyle\sum_{i=1}^s b_i k_i$
\end{algorithmic}
\end{algorithm}


\section{Order Conditions}\label{sec:orderconds}

The accuracy theory is based on matching the Taylor series of the numerical solution and of the exact solution, up to some specified order.
Butcher-trees \cite[Section IV.7]{Hairer_book_I} are a well accepted method of representing individual terms in the Taylor series expansions.
The derivation of order conditions for Rosenbrock-K methods is an extension of the framework used to derive order conditions for Rosenbrock-W methods.  The existing theory for Rosenbrock-W methods is based on the use of $TW$-trees, a subclass of $P$-trees \cite[Section IV.7]{Hairer_book_II}.  $P$-trees themselves are an extension of the set $T$ of Butcher-trees that allow for two different colors of the nodes. We have the following definition  \cite[Section IV.7]{Hairer_book_II}:
\[ 
TW = \left\{ \begin{array}{cl}P\textrm{-trees:} & \textrm{end vertices are meagre, and} \\
				 & \textrm{fat vertices are singly branched} \end{array} \right\} 
\]
In the context of Rosenbrock-K and Rosenbrock-W methods, a meagre node represents an actual derivative of $f$ coming from the first term on the right of equation \eqref{eqn:ros-stage}, while a meagre node is an appearance of the approximate Jacobian $\mathbf{A}_n$ coming from the second term on the right of equation \eqref{eqn:ros-stage}.  Each tree represents a single elementary differential in the Taylor series of either the exact or numerical solutions of the ODE.  

Figure \ref{fig:ROWtrees} shows all TW-trees and Rosenbrock-W conditions for up to order three \cite[Section IV.7]{Hairer_book_II}. 
The correspondence between the $TW$-trees and elementary differentials, and Rosenbrock-W order conditions, is summarized next:
\begin{itemize}
\item For the elementary differentials in Figure \ref{fig:ROWtrees} superscripts represent component indices, and subscripts represent indices of variables with respect to which partial derivatives are taken. For example, $f^J$ is the $J$-th component of $f$, and $f^J_K f^K = \sum_K \partial f^J/\partial y^K \cdot f^K$.
 \item A meagre node represents a derivative of $f$.
 \item The order of the $f$ derivative equals the number of children the meagre node has.
 \item A fat node represents the appearance of the approximate Jacobian matrix, $\mathbf{A}_n$, in the elementary differential.
\end{itemize}
The correspondence between the $TW$-trees and Rosenbrock-W order conditions, is as follows:
\begin{itemize}
\item For the order conditions in Figure \ref{fig:ROWtrees} the summations apply to all repeated indices in the expression.
 \item For Rosenbrock-W methods:
 \quad \begin{itemize}
	\item an edge originating from meagre node $j$ to another node $k$ gives $\alpha_{j,k}$,
	\item an edge originating from fat node $j$ to another node $k$ gives $\gamma_{j,k}$.
 \end{itemize}
 \item For classical Rosenbrock methods
 \quad \begin{itemize}
	\item an edge connecting meagre node $j$, having multiple children, to a meagre node $k$ gives $\alpha_{j,k}$,
	\item an edge connecting meagre node $j$, having a single child, to a meagre node $k$ gives $\beta_{j,k}$. 
 \end{itemize}
\end{itemize}

The exact solution is represented by trees containing only meagre nodes, since the approximate Jacobian matrix never appears in its series expansion.  For this reason Rosenbrock-W methods have two sets of order conditions: those arising from trees containing only meagre nodes, and those arising from trees containing at least one fat node.  Trees containing fat nodes do not correspond to any trees in the exact solution and, as seen in Figure \ref{fig:ROWtrees}, the corresponding coefficients are set to zero  \cite{Hairer_book_II}.

\begin{figure}[htp]
  \caption{TW-trees and Rosenbrock-W conditions up to order three \cite[Section IV.7]{Hairer_book_II}. }
  \label{fig:ROWtrees}
  \begin{center}
  \begin{tabular}{|c|c|c|lcr|}
  \hline
  $a$ & \begin{tikzpicture}[scale=.5]
      [meagre/.style={circle,draw, fill=black!100,thick},
      fat/.style={circle,draw,thick}]
      \node[circle,draw, fill=black!100,thick] (j) at (0,0) [label=right:$j$] {};
  \end{tikzpicture} & $ f^J $ & $\sum b_j$ & $=$ & $1$ \\
\hline
 $b_1$ & \begin{tikzpicture}[scale=.5]
      [meagre/.style={circle,draw, fill=black!100,thick},
      fat/.style={circle,draw,thick}]
      \node[circle,draw, fill=black!100,thick] (j) at (0,0) [label=right:$j$] {};
      \node[circle,draw, fill=black!100,thick] (k) at (1,1) [label=right:$k$] {};
      \draw[-] (j) -- (k);
  \end{tikzpicture} & $ f^J_Kf^K $ & $\sum b_j \alpha_{j,k}$ & $=$ & $1/2$ \\
$b_2$  & \begin{tikzpicture}[scale=.5]
      [meagre/.style={circle,draw, fill=black!100,thick},
      fat/.style={circle,draw,thick}]
      \node[circle,draw, thick] (j) at (0,0) [label=right:$j$] {};
      \node[circle,draw, fill=black!100,thick] (k) at (1,1) [label=right:$k$] {};
      \draw[-] (j) -- (k);
  \end{tikzpicture} & $ \mathbf{A}_{JK}f^K $ & $\sum b_j \gamma_{j,k}$ & $=$ & $0$ \\
\hline
$c$  & \begin{tikzpicture}[scale=.5]
      [meagre/.style={circle,draw, fill=black!100,thick},
      fat/.style={circle,draw,thick}]
      \node[circle,draw, fill=black!100,thick] (j) at (1,0) [label=right:$j$] {};
      \node[circle,draw, fill=black!100,thick] (k) at (2,1) [label=right:$k$] {};
      \node[circle,draw, fill=black!100,thick] (l) at (0,1) [label=right:$l$] {};
      \draw[-] (j) -- (k);
      \draw[-] (j) -- (l);
  \end{tikzpicture} & $ f^J_{KL}f^Kf^L $ & $\sum b_j \alpha_{j,k} \alpha_{j,l}$ & $=$ & $1/3$ \\
\hline
$d_1$  & \begin{tikzpicture}[scale=.5]
      [meagre/.style={circle,draw, fill=black!100,thick},
      fat/.style={circle,draw,thick}]
      \node[circle,draw, fill=black!100,thick] (j) at (0,0) [label=right:$j$] {};
      \node[circle,draw, fill=black!100,thick] (k) at (1,1) [label=right:$k$] {};
      \node[circle,draw, fill=black!100,thick] (l) at (0,2) [label=right:$l$] {};
      \draw[-] (j) -- (k);
      \draw[-] (k) -- (l);
  \end{tikzpicture} & $ f^J_Kf^K_Lf^L $ & $\sum b_j \alpha_{j,k}\alpha_{k,l}$ & $=$ & $1/6$ \\
$d_2$ & \begin{tikzpicture}[scale=.5]
      [meagre/.style={circle,draw, fill=black!100,thick},
      fat/.style={circle,draw,thick}]
      \node[circle,draw, thick] (j) at (0,0) [label=right:$j$] {};
      \node[circle,draw, fill=black!100,thick] (k) at (1,1) [label=right:$k$] {};
      \node[circle,draw, fill=black!100,thick] (l) at (0,2) [label=right:$l$] {};
      \draw[-] (j) -- (k);
      \draw[-] (k) -- (l);
  \end{tikzpicture} & $ \mathbf{A}_{JK}f^K_Lf^L $ & $\sum b_j \gamma_{j,k}\alpha_{k,l}$ & $=$ & $0$ \\
$d_3$  & \begin{tikzpicture}[scale=.5]
      [meagre/.style={circle,draw, fill=black!100,thick},
      fat/.style={circle,draw,thick}]
      \node[circle,draw, fill=black!100,thick] (j) at (0,0) [label=right:$j$] {};
      \node[circle,draw, thick] (k) at (1,1) [label=right:$k$] {};
      \node[circle,draw, fill=black!100,thick] (l) at (0,2) [label=right:$l$] {};
      \draw[-] (j) -- (k);
      \draw[-] (k) -- (l);
  \end{tikzpicture} & $ f^J_K\mathbf{A}_{KL}f^L $ & $\sum b_j \alpha_{j,k}\gamma_{k,l}$ & $=$ & $0$ \\
$d_4$  & \begin{tikzpicture}[scale=.5]
      [meagre/.style={circle,draw, fill=black!100,thick},
      fat/.style={circle,draw,thick}]
      \node[circle,draw, thick] (j) at (0,0) [label=right:$j$] {};
      \node[circle,draw, thick] (k) at (1,1) [label=right:$k$] {};
      \node[circle,draw, fill=black!100,thick] (l) at (0,2) [label=right:$l$] {};
      \draw[-] (j) -- (k);
      \draw[-] (k) -- (l);
  \end{tikzpicture} & $ A_{JK}\mathbf{A}_{KL}f^L $ & $\sum b_j \gamma_{j,k}\gamma_{k,l}$ & $=$ & $0$ \\
\hline
  \end{tabular}
  \end{center}
\end{figure}

In order to build the relevant trees for Rosenbrock-K methods we need to have a closer look at the properties of the Jacobian approximation \eqref{eqn:ROK-approximation}.

\begin{lemma}[Property of the Rosenbrock-Krylov approximate Jacobian \eqref{eqn:ROK-approximation}]\label{lemma:ROK-matrix} 
For any $0 \le k \le M-1$ it holds that
\[
\mathbf{A}_n^k\, f_n = \mathbf{J}_n^k\, f_n\,,
\]
where $M=dim(\K_M)$.
\end{lemma}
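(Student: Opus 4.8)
The plan is to prove the identity $\mathbf{A}_n^k f_n = \mathbf{J}_n^k f_n$ for $0 \le k \le M-1$ by induction on $k$, using the two key structural facts about the Arnoldi decomposition: that $f_n$ lies in the Krylov space $\K_M$ (so $\mathbf{V}_{n;M}\mathbf{V}_{n;M}^T f_n = f_n$), and more generally that $\mathbf{V}_{n;M}\mathbf{V}_{n;M}^T$ is the orthogonal projector onto $\K_M(\mathbf{J}_n,f_n)$, which is an invariant-like space in the sense that $\mathbf{J}_n \mathbf{V}_{n;M} = \mathbf{V}_{n;M}\mathbf{H}_{n;M} + h_{M+1,M} v_{M+1} e_M^T$ — the standard Arnoldi relation. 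The base case $k=0$ is immediate since $\mathbf{A}_n^0 f_n = f_n = \mathbf{J}_n^0 f_n$.

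For the inductive step, I would first establish the sharper claim that for $0 \le k \le M-1$ the vector $\mathbf{J}_n^k f_n$ not only equals $\mathbf{A}_n^k f_n$ but also lies in $\K_M$, i.e. $\mathbf{V}_{n;M}\mathbf{V}_{n;M}^T \mathbf{J}_n^k f_n = \mathbf{J}_n^k f_n$; this is true by the very definition \eqref{eqn:krylovspace} of $\K_M$ as the span of $f_n, \mathbf{J}_n f_n, \dots, \mathbf{J}_n^{M-1} f_n$. Assuming $\mathbf{A}_n^k f_n = \mathbf{J}_n^k f_n \in \K_M$ for some $k \le M-2$, I compute using \eqref{eqn:ROK-approximation}:
\[
\mathbf{A}_n^{k+1} f_n = \mathbf{A}_n \left( \mathbf{A}_n^k f_n \right) = \mathbf{V}_{n;M}\mathbf{V}_{n;M}^T \mathbf{J}_n \mathbf{V}_{n;M}\mathbf{V}_{n;M}^T \left( \mathbf{J}_n^k f_n \right).
\]
Since $\mathbf{J}_n^k f_n \in \K_M$ we have $\mathbf{V}_{n;M}\mathbf{V}_{n;M}^T \mathbf{J}_n^k f_n = \mathbf{J}_n^k f_n$, so this reduces to $\mathbf{V}_{n;M}\mathbf{V}_{n;M}^T \mathbf{J}_n^{k+1} f_n$. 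Now because $k+1 \le M-1$, the vector $\mathbf{J}_n^{k+1} f_n$ is again one of the spanning vectors of $\K_M$, hence fixed by the projector, giving $\mathbf{A}_n^{k+1} f_n = \mathbf{J}_n^{k+1} f_n \in \K_M$, which closes the induction.

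The one place requiring genuine care — the main obstacle — is the step where I drop the \emph{outermost} projector $\mathbf{V}_{n;M}\mathbf{V}_{n;M}^T$ applied to $\mathbf{J}_n^{k+1} f_n$: this is exactly where the restriction $k \le M-1$ is essential, since $\mathbf{J}_n^M f_n$ need not lie in $\K_M$ (the Krylov space may not be $\mathbf{J}_n$-invariant), so the argument breaks for $k = M$. I would make sure to phrase the induction hypothesis to carry along the membership $\mathbf{J}_n^k f_n \in \K_M$ explicitly, and to note that the inductive step consumes one power, so it is only valid while the resulting power stays within the range $\{0,\dots,M-1\}$ guaranteed by the definition of $\K_M$. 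Everything else — the idempotency $\mathbf{V}_{n;M}^T \mathbf{V}_{n;M} = \mathbf{I}_{M\times M}$ and the defining relation \eqref{eqn:ROK-approximation} — is routine bookkeeping.
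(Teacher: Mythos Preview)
Your proof is correct and follows essentially the same route as the paper's: induction on the power, using that $\mathbf{V}_{n;M}\mathbf{V}_{n;M}^T$ is the orthogonal projector onto $\K_M$ and that $\mathbf{J}_n^k f_n \in \K_M$ for $0\le k\le M-1$ by the very definition of the Krylov space, which lets you peel off both the inner and outer projectors in $\mathbf{A}_n\,\mathbf{J}_n^k f_n$. The Arnoldi relation you mention in the plan is not actually needed, and indeed neither you nor the paper use it.
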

\begin{proof}
Recall that $\mathbf{V}_{n;M} \, \mathbf{V}_{n;M}^T$ is the orthogonal projector onto $\mathcal{K}_M$.
If a vector $u$ is in the Krylov space $\mathcal{K}_M$, its orthogonal projection onto $\mathcal{K}_M$ is the vector itself:
\[
u \in \mathcal{K}_M \quad \Rightarrow \quad \mathbf{V}_{n;M} \, \mathbf{V}_{n;M}^T \, u = u\,.
\]

The proof of the Lemma is by finite induction. As the base case we have that 
\[
\mathbf{A}_n^0\, f_n = \mathbf{J}_n^0\, f_n = f_n\,.
\]
%
 %
Next we assume that $\mathbf{A}_n^{i-1}\, f_n = \mathbf{J}_n^{i-1}\, f_n$ for some $i \leq M-1$ and will show that $\mathbf{A}_n^i\, f_n = \mathbf{J}_n^i\, f_n$.
By the definition of the approximate Jacobian \eqref{eqn:ROK-approximation} and our assumption it holds that
\[ 
\mathbf{A}_n^i\, f_n = \mathbf{A}_n\cdot \mathbf{A}_n^{i-1}\, f_n  = \mathbf{A}_n\cdot \mathbf{J}_n^{i-1}\, f_n= \mathbf{V}_{n;M} \, \mathbf{V}_{n;M}^T \, \mathbf{J}_n \, \mathbf{V}_{n;M} \, \mathbf{V}_{n;M}^T\cdot  \mathbf{J}_n^{i-1}\, f_n\,.  
 \]
Since $M \geq i$ we have that $\mathbf{J}_n^{i-1} f_n \in \K_M$, and 
\[ 
 \mathbf{V}_{n;M} \, \mathbf{V}_{n;M}^T\, \mathbf{J}_n^{i-1}f_n = \mathbf{J}_n^{i-1} f_n \quad \Rightarrow \quad
\mathbf{A}_n^i\, f_n = \mathbf{V}_{n;M} \, \mathbf{V}_{n;M}^T \, \mathbf{J}_n \cdot  \mathbf{J}_n^{i-1} f_n  = \mathbf{V}_{n;M} \, \mathbf{V}_{n;M}^T \,  \mathbf{J}_n^{i} f_n\,.
\]
Since $M \geq i+1$ we have that $\mathbf{J}_n^{i} f_n \in \K_M$ and
\[ 
 \mathbf{V}_{n;M} \, \mathbf{V}_{n;M}^T\, \mathbf{J}_n^{i}f_n = \mathbf{J}_n^{i} f_n \quad \Rightarrow \quad
\mathbf{A}_n^i\, f_n = \mathbf{J}_n^{i} f_n\,.
\]
\qquad
\end{proof}

\begin{lemma}[Property of elementary differentials using the approximation \eqref{eqn:ROK-approximation}]\label{lemma:ROK-differentials} 
When the Rosenbrock-Krylov matrix approximation  \eqref{eqn:ROK-approximation} is used, all {\it linear} TW-trees of order $k \le M$ correspond to a single elementary differential, regardless of the color of their nodes.  
\end{lemma}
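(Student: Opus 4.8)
The plan is to reduce the statement to a direct generalization of Lemma~\ref{lemma:ROK-matrix}. First I would pin down the shape of a linear $TW$-tree of order $k$: by the defining property of $TW$-trees its unique end vertex is meagre, and every other vertex is singly branched, so the tree is simply a chain of $k$ nodes. The topmost node (the leaf) contributes the factor $f_n$, and each of the remaining $k-1$ singly-branched nodes contributes either $\mathbf{J}_n$ (if meagre) or $\mathbf{A}_n$ (if fat), according to the correspondence rules. Reading the chain from the root down to the leaf, the associated elementary differential is therefore a product
\[
\mathbf{B}_{k-1}\,\mathbf{B}_{k-2}\cdots \mathbf{B}_1\, f_n\,, \qquad \mathbf{B}_i \in \{\mathbf{J}_n,\,\mathbf{A}_n\}\,,
\]
and the claim amounts to showing that every such product equals $\mathbf{J}_n^{\,k-1} f_n$ whenever $k \le M$, independently of the colors $\mathbf{B}_i$.

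Second, I would prove by induction on $\ell$ that for every $\ell$ with $0 \le \ell \le M-1$ and every choice $\mathbf{B}_1,\dots,\mathbf{B}_\ell \in \{\mathbf{J}_n,\mathbf{A}_n\}$ one has $\mathbf{B}_\ell \cdots \mathbf{B}_1 f_n = \mathbf{J}_n^{\,\ell} f_n$. The base case $\ell = 0$ is the empty-product identity $f_n = f_n$. For the inductive step, assume $\mathbf{B}_{\ell-1}\cdots\mathbf{B}_1 f_n = \mathbf{J}_n^{\,\ell-1} f_n$ with $\ell \le M-1$. If $\mathbf{B}_\ell = \mathbf{J}_n$ the conclusion is immediate. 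If $\mathbf{B}_\ell = \mathbf{A}_n$, I would substitute $\mathbf{A}_n = \mathbf{V}_{n;M}\mathbf{V}_{n;M}^T \mathbf{J}_n \mathbf{V}_{n;M}\mathbf{V}_{n;M}^T$ from \eqref{eqn:ROK-approximation}: since $\ell - 1 \le M-1$ we have $\mathbf{J}_n^{\,\ell-1}f_n \in \K_M$, so the inner orthogonal projector acts as the identity and $\mathbf{A}_n \mathbf{J}_n^{\,\ell-1}f_n = \mathbf{V}_{n;M}\mathbf{V}_{n;M}^T \mathbf{J}_n^{\,\ell} f_n$; and since $\ell \le M-1$ we also have $\mathbf{J}_n^{\,\ell} f_n \in \K_M$, so the remaining projector acts as the identity, giving $\mathbf{J}_n^{\,\ell}f_n$. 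This is exactly the chain of reasoning in the proof of Lemma~\ref{lemma:ROK-matrix}, now carried out for arbitrary mixed products rather than pure powers of $\mathbf{A}_n$.

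Finally, applying this with $\ell = k-1 \le M-1$ shows that every linear $TW$-tree of order $k \le M$ has elementary differential $\mathbf{J}_n^{\,k-1}f_n$, irrespective of which of the $2^{k-1}$ colorings of its singly-branched nodes is chosen; in particular it coincides with the elementary differential of the all-meagre linear tree of the same order, which is the one that appears in the exact-solution Taylor expansion. I would expect the only point requiring care to be the index bookkeeping: the two projector simplifications need $\mathbf{J}_n^{\,\ell-1}f_n \in \K_M$ and $\mathbf{J}_n^{\,\ell}f_n \in \K_M$, i.e.\ $\ell \le M-1$, and the largest value of $\ell$ that occurs is $k-1$, which is precisely why the hypothesis reads $k \le M$ rather than $k \le M-1$. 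Beyond this, no genuine obstacle is anticipated, since the argument is a mild generalization of the already-established Lemma~\ref{lemma:ROK-matrix}.
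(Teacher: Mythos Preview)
Your proof is correct and follows essentially the same approach as the paper: both arguments reduce an arbitrary mixed product of $\mathbf{J}_n$'s and $\mathbf{A}_n$'s applied to $f_n$ to $\mathbf{J}_n^{k-1}f_n$ using the Krylov projection property, with the same index bookkeeping to ensure each intermediate vector lies in $\K_M$. The only cosmetic difference is that the paper groups consecutive same-color nodes into blocks and invokes Lemma~\ref{lemma:ROK-matrix} bidirectionally to swap $\mathbf{A}_n^m f_n \leftrightarrow \mathbf{J}_n^m f_n$ block by block, whereas you carry out a direct node-by-node induction on the mixed product; your version is slightly more self-contained, the paper's slightly more modular, but the underlying mechanism is identical.
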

\begin{proof}
In a linear tree each node has only one child.
A linear TW-tree with a fat root can be described by the sequence of its nodes, starting from the root. For example, the structure of a linear tree where the first $\nu_1$ nodes from the root are fat, followed by $\mu_1$ meagre nodes, etc. is described by the sequence $(\circ^{\nu_1} \bullet^{\mu_1} \dots \circ^{\nu_p} \bullet^{\mu_p})$ with $\mu_p>1$ (since the leaf is a meagre node). 

Consider now a tree of order $k = \nu_1 + \mu_1 + \dots + \nu_p + \mu_p \le M$. The corresponding elementary differential has the form $\mathbf{A}_n^{\nu_1} \mathbf{J}_n^{\mu_1} \dots \mathbf{A}_n^{\nu_p} \mathbf{J_n}^{\mu_p-1} f_n$. Repeated applications of Lemma \ref{lemma:ROK-matrix} reveal that
\begin{eqnarray*}
\mathbf{A}_n^{\nu_1} \mathbf{J}_n^{\mu_1} \dots \mathbf{A}_n^{\nu_p} \underbrace{\mathbf{J_n}^{\mu_p-1} f_n}_{=\mathbf{A}_n^{\mu_p-1} f_n} &=& \mathbf{A}_n^{\nu_1} \mathbf{J}_n^{\mu_1} \dots \underbrace{\mathbf{A}_n^{\nu_p+\mu_p-1} f_n}_{=\mathbf{J}_n^{\nu_p+\mu_p-1} f_n} = \dots
= \mathbf{J_n}^{k-1} f_n\,.
\end{eqnarray*}
Consequently, any linear TW-tree of order $k \le M$ has the same differential as the linear tree with only meagre nodes $(\bullet^{k})$.
\qquad
\end{proof}

An important consequence of Lemma \ref{lemma:ROK-differentials} is that if a linear TW sub-tree with $k \le M$ nodes has a fat root, 
the corresponding differential is the same as for the linear tree with only meagre nodes $(\bullet^{k})$. This observation allows us to essentially ``recolor'' linear TW sub-trees with a fat root (i.e., group them in classes of equivalence). This leads to the following.

\begin{definition}[TK-trees]
\begin{eqnarray*}
TK &=& \left\{ TW\textrm{-trees:}  \textrm{ no linear sub-tree has a fat root}  \right\} \,; \\
TK(k) &=& \left\{ TW\textrm{-trees:}  \textrm{ no linear sub-tree of order} \right.\\
                                 && \left. \qquad \qquad \textrm{ smaller than or equal to }   k  \textrm{ has a fat root}  \right\} \,.
\end{eqnarray*}
\end{definition}
For $t \in TK$ let $\rho(t)$ define the number of vertices.
We denote by $t={_\bullet}[t_1,\dots,t_m]$ a TK-tree tree with a meagre root linking the subtrees $t_1,\dots,t_m$, and by 
$t={_\circ}[t_1]$ a TK-tree with a fat root to which the subtree $t_1$ is connected. A special case is the single node tree ${_\bullet}[\,]$.
The elementary differentials associated with TK-trees are the same as those of TW-trees, \cite[Definition 7.5]{Hairer_book_II}.
The TK-tree coefficients are constructed recursively as follows.
%
%
\begin{definition}[Coefficients of TK-trees] For $t \in TK$
\[
\phi_j(t) = \left\{
\begin{array}{lll}
\displaystyle 1 & \mbox{if} & t={_\bullet}[\,] \\
\displaystyle \sum_{k_1,\dots,k_m} \alpha_{j,k_m}\, \phi_{k_1}(t_1) \dots \phi_{k_m}(t_m) & \mbox{if} & t={_\bullet}[t_1,\dots,t_m],~ m \ge 2 \\
\displaystyle \sum_k \beta_{j,k}\, \phi_k(t_1) & \mbox{if} & t={_\bullet}[t_1] \\
\displaystyle \sum_k \gamma_{j,k}\, \phi_k(t_1) & \mbox{if} & t={_\circ}[t_1] 
\end{array}
\right.
\]
\end{definition}

\subsection{Rosenbrock-K methods of type 1}

\begin{definition}
A Rosenbrock-K method of type 1 is given by Algorithm \ref{ROK-nonautonomous-step} and uses an underlying Krylov subspace given by \eqref{eqn:krylovspace}.
\end{definition}

\begin{subequations}
\begin{theorem}[Order conditions for Rosenbrock-K methods]\label{thm:ROK1-conditions}
A Rosenbrock-K method of type 1 has order $p$ iff the underlying Krylov space \eqref{eqn:krylovspace} has dimension $M \ge p$, and the following order conditions hold:
\label{eqn:ROK-conditions}
\begin{eqnarray}
\label{eqn:ROK-condition-T}
\sum_j b_j\, \phi_j(t) = \frac{1}{\gamma(t)} \quad \forall\, t \in T ~~ \mbox{with } \rho(t) \le p\,, \\
\label{eqn:ROK-condition-TW}
\sum_j b_j\, \phi_j(t) = 0 \quad \forall\, t \in TK\backslash T ~~ \mbox{with } \rho(t) \le p\,.
\end{eqnarray}
Here $\rho(t)$ is the number of vertices of the tree $t$, and $\gamma(t)$ is the ``product of $\rho(t)$ and all orders of the trees which appear, if the roots, one after another, are removed from $t$'' \cite[Section II.2]{Hairer_book_I}.
\end{theorem}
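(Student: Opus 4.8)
The plan is to realise a Rosenbrock-K method of type 1 as a Rosenbrock-W method \eqref{eqn:ros} in which $\mathbf{A}_n$ is the particular matrix \eqref{eqn:ROK-approximation} — in the non-autonomous case, applied to the extended autonomous system \eqref{eqn:autode}, for which $\widehat{\mathbf{J}}_n$-vector products are given by \eqref{eqn:mfjvp} and Lemmas~\ref{lemma:ROK-matrix}--\ref{lemma:ROK-differentials} carry over verbatim — and then to quotient the $TW$-trees by the differential identifications of Lemma~\ref{lemma:ROK-differentials}. Because the reduced recurrence \eqref{eqn:reduced-stage-lambda}--\eqref{eqn:ROK-full-stage} is an exact reformulation of \eqref{eqn:ros-stage} with this choice of $\mathbf{A}_n$, the classical Rosenbrock-W accuracy theory \cite[Section IV.7]{Hairer_book_II} yields the B-series expansion of the local error,
\[
y_{n+1}-y(t_n+h)=\sum_{\substack{t\in TW\\ \rho(t)\le p}}\frac{h^{\rho(t)}}{\sigma(t)}\Bigl(\textstyle\sum_j b_j\,\phi_j^{W}(t)-e(t)\Bigr)F(t)(y_n)+O(h^{p+1})\,,
\]
where $\phi_j^{W}(t)$ is the $TW$-elementary weight read off $t$ as in Figure~\ref{fig:ROWtrees} ($\alpha$ on edges leaving meagre nodes, $\gamma$ on edges leaving fat nodes), $F(t)$ is the associated elementary differential, $\sigma(t)$ the symmetry number, and $e(t)=1/\gamma(t)$ when $t\in T$ while $e(t)=0$ otherwise.

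Assuming $M\ge p$, every linear sub-tree of a $t\in TW$ with $\rho(t)\le p$ has order at most $M$, so by Lemma~\ref{lemma:ROK-differentials} and the remark after it, replacing every linear sub-tree with a fat root by the all-meagre linear tree of the same order is a well-defined operation $\kappa$ mapping $\{t\in TW:\rho(t)\le p\}$ onto $\{\tau\in TK:\rho(\tau)\le p\}$, with $\rho(\kappa(t))=\rho(t)$ and $F(t)(y_n)=F(\kappa(t))(y_n)$. I would regroup the display above by the value of $\kappa$; the theorem then reduces to the two identities, valid for each $\tau\in TK$,
\[
\sum_{t:\,\kappa(t)=\tau}\frac{\phi_j^{W}(t)}{\sigma(t)}=\frac{\phi_j(\tau)}{\sigma(\tau)}\,,\qquad
\sum_{t:\,\kappa(t)=\tau}\frac{e(t)}{\sigma(t)}=\frac{1}{\sigma(\tau)}\cdot\begin{cases}1/\gamma(\tau)\,,&\tau\in T\,,\\ 0\,,&\tau\notin T\,.\end{cases}
\]
The second holds because $\kappa$ only deletes fat nodes, so the fibre of $\kappa$ over $\tau$ contains an all-meagre tree — necessarily $\tau$ itself — exactly when $\tau\in T$. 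The first I would prove by induction on the recursive structure of $TK$-trees: the only non-trivial case is a meagre node with a single child, whose $TK$-weight carries $\beta_{j,k}=\alpha_{j,k}+\gamma_{j,k}$, the term $\alpha_{j,k}$ accounting for the preimages in which that node stays meagre and $\gamma_{j,k}$ for those in which a fat node is prepended above it; the symmetry numbers combine correctly because a recoloured fat tail that happens to coincide with a sibling meagre tail carries only half the symmetry, exactly compensating the coincidence of preimages (as one checks against $d_1$--$d_4$ of Figure~\ref{fig:ROWtrees}). Substituting back gives
\[
y_{n+1}-y(t_n+h)=\sum_{\substack{\tau\in TK\\ \rho(\tau)\le p}}\frac{h^{\rho(\tau)}}{\sigma(\tau)}\Bigl(\textstyle\sum_j b_j\,\phi_j(\tau)-e(\tau)\Bigr)F(\tau)(y_n)+O(h^{p+1})\,,
\]
so \eqref{eqn:ROK-condition-T}--\eqref{eqn:ROK-condition-TW} annihilate every bracket, the local error is $O(h^{p+1})$, and the method has order $p$; conversely, since the elementary differentials $\{F(\tau):\tau\in TK,\ \rho(\tau)\le p\}$ are linearly independent over the choice of right-hand side $f$, each bracket must vanish, so the conditions are necessary whenever $M\ge p$.

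It remains to argue that $M\ge p$ is necessary for order $p$. If $M<p$, the identification underlying $\kappa$ breaks at order $M+1\le p$: taking a linear test problem $y'=\mathbf{J}y$ for which $\mathbf{J}_n^{M}f_n\notin\K_M$, Lemma~\ref{lemma:ROK-matrix} shows that every linear $TW$-tree of order $M+1$ with a meagre root has elementary differential $\mathbf{J}_n^{M}f_n$, whereas every linear $TW$-tree of order $M+1$ with a fat root has differential $\mathbf{V}_{n;M}\mathbf{V}_{n;M}^{T}\mathbf{J}_n^{M}f_n\ne\mathbf{J}_n^{M}f_n$. These two vectors are linearly independent, so the $h^{M+1}$ term of the local error vanishes only if, besides the order-$(M+1)$ condition already present in \eqref{eqn:ROK-conditions}, a further linear relation among the method coefficients holds — one not appearing in \eqref{eqn:ROK-conditions}. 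Hence a Rosenbrock-K method whose coefficients obey exactly \eqref{eqn:ROK-conditions} cannot have order $p$ when $M<p$, which closes the equivalence.

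I expect the crux of the argument to be the first combinatorial identity, namely verifying that summing the $TW$-weights over a $\kappa$-fibre reproduces the $TK$-weight through the splitting $\beta=\alpha+\gamma$ while the symmetry numbers correctly track the collapse of fat tails onto meagre siblings; the remaining steps are routine once the reduction to the classical Rosenbrock-W theory and the correspondence with $TK$-trees are in place.
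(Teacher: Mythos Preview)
Your strategy—view the Rosenbrock-K step as a Rosenbrock-W step with the particular matrix \eqref{eqn:ROK-approximation}, invoke the $TW$-series expansion from \cite[Theorem~7.7]{Hairer_book_II}, and then collapse trees by the differential identifications of Lemma~\ref{lemma:ROK-differentials}—is precisely the paper's route; its own proof is the single sentence ``follows from our discussion and from the order conditions of Rosenbrock-W methods.'' Your expansion is in the right spirit, but the fibre-by-fibre identity
\[
\sum_{t:\,\kappa(t)=\tau}\frac{\phi_j^{W}(t)}{\sigma(t)}=\frac{\phi_j(\tau)}{\sigma(\tau)}
\]
is not literally true. Take $\tau=g_1$ in Figure~\ref{fig:ROKtrees}: its meagre root is singly branched, but the subtree hanging below is \emph{not} linear. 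Fattening that root yields $g_2$, and since the subtree at the fat root of $g_2$ is not linear, $\kappa$ does nothing to it; thus $\kappa(g_2)=g_2$ and $\kappa^{-1}(g_1)=\{g_1\}$. The left side is then $\alpha_{j,k}\alpha_{k}^2/\sigma$, whereas the paper's Definition of $\phi_j$ assigns $\beta_{j,k}\alpha_{k}^2$ to $g_1$. Your inductive step ``$\gamma_{j,k}$ for the preimages in which a fat node is prepended'' only works when the subtree below the singly-branched meagre node is itself linear (as for $d$, $f$, $h$); when it is not, the fat-root version is a \emph{separate} $TK$-tree with its own singleton fibre, not a preimage.

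The repair is that the two families of conditions coincide as \emph{sets}, not term-by-term: for each such pair $(\tau_\bullet,\tau_\circ)$ differing only in the colour of a singly-branched node above a non-linear subtree, the collapsed $TW$-conditions $\{\alpha\cdots=1/\gamma(\tau_\bullet),\ \gamma\cdots=0\}$ and the $TK$-conditions $\{\beta\cdots=1/\gamma(\tau_\bullet),\ \gamma\cdots=0\}$ span the same affine subspace because $\beta=\alpha+\gamma$. Iterating this substitution up each chain of singly-branched nodes gives the equivalence. (The paper is itself not fully consistent here: Figure~\ref{fig:ROKtrees} writes the $g_1$ condition with $\alpha_{j,k}$, i.e.\ the fibre-sum form, rather than the $\beta_{j,k}$ dictated by the Definition of $\phi_j$.) Once you replace the fibre identity by this set-level equivalence, the rest of your argument—including the necessity of $M\ge p$—goes through as written.
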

\begin{proof}
The proof follows from our discussion and from the order conditions of Rosenbrock-W methods \cite[Theorem 7.7]{Hairer_book_II}.
\qquad
\end{proof}

\begin{theorem}[Order conditions for Rosenbrock-K methods with smaller Krylov space]\label{thm:ROK1-conditions-small}
A Rosenbrock-K method of type 1 with an underlying Krylov space \eqref{eqn:krylovspace} of dimension $M < p$ has order $p$ iff condition
\eqref{eqn:ROK-condition-T} holds, and, in addition:
\begin{eqnarray}
\label{eqn:ROK-condition-TWM}
\sum_j b_j\, \phi_j(t) = 0 \quad \forall\, t \in TK(M)\backslash T ~~ \mbox{with } \rho(t) \le p\,.
\end{eqnarray}
\end{theorem}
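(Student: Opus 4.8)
The plan is to run the same argument as in the proof of Theorem~\ref{thm:ROK1-conditions}, but to keep track of the fact that, when $M<p$, Lemma~\ref{lemma:ROK-differentials} only licenses the ``recoloring'' of \emph{linear} sub-trees of order at most $M$, rather than of all linear sub-trees. First I would note that a Rosenbrock-K method of type~1 is formally a Rosenbrock-W method: by construction it has the form \eqref{eqn:ros} with $\mathbf{A}_n$ the particular Jacobian approximation \eqref{eqn:ROK-approximation}, and the reduction of \eqref{eqn:ros-stage} to \eqref{eqn:reduced-stage-lambda}--\eqref{eqn:reduced-stage-mu} is an exact algebraic rewriting, not an approximate linear solve. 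Hence the Rosenbrock-W order theory \cite[Theorem~7.7]{Hairer_book_II} applies verbatim: for this fixed $\mathbf{A}_n$ the method has order $p$ iff, after collecting the terms of the Taylor expansion of the numerical solution by elementary differential, every coefficient agrees with that of the exact solution through order $p$. When $\mathbf{A}_n$ is arbitrary the $TW$-trees index pairwise distinct elementary differentials and one recovers exactly the conditions of Figure~\ref{fig:ROWtrees}; for the Krylov matrix, Lemma~\ref{lemma:ROK-differentials} identifies some of those differentials, forcing the associated conditions to be summed together.

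Next I would make this precise. Call two $TW$-trees of order $\le p$ \emph{$M$-equivalent} if one is obtained from the other by recoloring (meagre $\leftrightarrow$ fat, respecting the $P$-tree structural constraints) one or more linear sub-trees of order $\le M$; by Lemma~\ref{lemma:ROK-differentials} such trees share the same elementary differential under \eqref{eqn:ROK-approximation}. Each $M$-equivalence class has a unique representative in which no linear sub-tree of order $\le M$ carries a fat root, i.e. a unique representative in $TK(M)$ --- which is precisely why $TK(M)$ was introduced. Using the recursive definitions of the $TW$- and $TK$-tree coefficients together with $\beta_{j,k}=\alpha_{j,k}+\gamma_{j,k}$, one checks that $\sum_{t}\phi_j(t)$, summed over an $M$-equivalence class, equals the $TK$-tree coefficient $\phi_j$ of the canonical representative: a meagre node with a single child in a $TK(M)$-tree contributes the factor $\beta_{j,k}$, which is exactly the sum of the meagre contribution $\alpha_{j,k}$ and the fat contribution $\gamma_{j,k}$ over the two recolorings of that edge, and this extends multiplicatively along chains. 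Since $\gamma(t)$ and the symmetry factor are constant on an $M$-equivalence class (recoloring a totally ordered chain changes neither), the full set of $TW$ order conditions of order $\le p$ collapses onto the set indexed by $TK(M)$-trees: the all-meagre trees $t\in T$ give \eqref{eqn:ROK-condition-T}, and the trees $t\in TK(M)\setminus T$, whose differentials do not occur in the exact solution, give \eqref{eqn:ROK-condition-TWM}. Conversely, distinct $TK(M)$-trees of order $\le p$ still yield linearly independent elementary differentials, so no condition on the list is superfluous, which gives the ``iff''.

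The step I expect to be the main obstacle is the verification in the previous paragraph: showing that the $TW$-coefficients really reassemble into the $TK$-coefficient over each equivalence class, in particular when several identical linear branches are attached to the same meagre node, since there a recoloring alters the automorphism group and hence the symmetry coefficient, and one must check that the symmetry factors still combine correctly. This is essentially the same combinatorial identity that underpins the reduction of $P$-trees to classical Butcher trees in \cite[Section~IV.7]{Hairer_book_II}. One must also confirm that the bound ``order $\le M$'' is sharp --- that a fat-rooted linear sub-tree of order strictly between $M+1$ and $p$ does \emph{not} collapse under \eqref{eqn:ROK-approximation}, so the corresponding conditions genuinely cannot be dropped --- and this is exactly what distinguishes Theorem~\ref{thm:ROK1-conditions-small} from Theorem~\ref{thm:ROK1-conditions}.
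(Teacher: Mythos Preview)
Your proposal is correct and follows the same underlying approach as the paper, though it is far more detailed: the paper's own proof is the single line ``Follows immediately from Theorem~\ref{thm:ROK1-conditions},'' and the proof of Theorem~\ref{thm:ROK1-conditions} in turn is just ``follows from our discussion and from the order conditions of Rosenbrock-W methods \cite[Theorem~7.7]{Hairer_book_II}.'' What you have written is precisely an unpacking of that discussion --- Lemma~\ref{lemma:ROK-differentials} restricted to linear sub-trees of order $\le M$, passage to the canonical $TK(M)$ representative, and the $\alpha+\gamma=\beta$ reassembly of the $TW$-coefficients --- so there is no methodological divergence.

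The obstacles you flag (compatibility of symmetry factors under recoloring when identical branches are present, and sharpness of the bound $M$) are genuine fine points that the paper simply does not address; it treats them as absorbed into the reference to \cite[Section~IV.7]{Hairer_book_II}. Your instinct that the symmetry-factor check is the same combinatorial identity behind the $P$-tree/$T$-tree reduction is right, and that is where the paper implicitly delegates the work.
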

\end{subequations}
\begin{proof}
Follows immediately from Theorem \ref{thm:ROK1-conditions}.
\qquad
\end{proof}

\begin{remark}
The number of required order conditions for Rosenbrock-K methods is substantially smaller than the number of order conditions for Rosenbrock-W methods.
\end{remark}

Figure \ref{fig:ROWtrees} reveals that all TW-trees up to order three containing a fat root are linear, and so every tree containing a fat node can be recolored to contain only meagre nodes.  Thus the order conditions for Rosenbrock-K methods are the same as those for classical Rosenbrock methods for up to order three (while Rosenbrock-W methods need four additional conditions).  Figure \ref{fig:ROKtrees} shows the TK-trees and order conditions for up to order four; Rosenbrock-K methods  require only a single extra order condition for order four (while Rosenbrock-W methods require seventeen additional conditions). Finally, Figure \ref{fig:extraROK} shows
the four additional TK-trees and Rosenbrock-K conditions needed for order five.

\begin{figure}[htp]
  \label{fig:ROKtrees}
  \caption{TK-trees and Rosenbrock-Krylov conditions up to order four.}
  \begin{center}
  \begin{tabular}{|c|c|c|lcr|}
\hline
  $a$ &
  \begin{tikzpicture}[scale=.5]
      [meagre/.style={circle,draw, fill=black!100,thick},
      fat/.style={circle,draw,thick}]
      \node[circle,draw, fill=black!100,thick] (j) at (0,0) [label=right:$j$] {};
  \end{tikzpicture} & $ f^J $ & $\sum b_j$ & $=$ & $1$ \\
\hline
  $b$ &
  \begin{tikzpicture}[scale=.5]
      [meagre/.style={circle,draw, fill=black!100,thick},
      fat/.style={circle,draw,thick}]
      \node[circle,draw, fill=black!100,thick] (j) at (0,0) [label=right:$j$] {};
      \node[circle,draw, fill=black!100,thick] (k) at (1,1) [label=right:$k$] {};
      \draw[-] (j) -- (k);
  \end{tikzpicture} & $ f^J_Kf^K $ & $\sum b_j \beta_{j,k}$ & $=$ & $1/2$ \\
\hline
  $c$ &
  \begin{tikzpicture}[scale=.5]
      [meagre/.style={circle,draw, fill=black!100,thick},
      fat/.style={circle,draw,thick}]
      \node[circle,draw, fill=black!100,thick] (j) at (1,0) [label=right:$j$] {};
      \node[circle,draw, fill=black!100,thick] (k) at (2,1) [label=right:$k$] {};
      \node[circle,draw, fill=black!100,thick] (l) at (0,1) [label=right:$l$] {};
      \draw[-] (j) -- (k);
      \draw[-] (j) -- (l);
  \end{tikzpicture} & $ f^J_{KL}f^Kf^L $ & $\sum b_j \alpha_{j,k}\alpha_{j,l}$ & $=$ & $1/3$ \\
\hline
  $d$ &
  \begin{tikzpicture}[scale=.5]
      [meagre/.style={circle,draw, fill=black!100,thick},
      fat/.style={circle,draw,thick}]
      \node[circle,draw, fill=black!100,thick] (j) at (0,0) [label=right:$j$] {};
      \node[circle,draw, fill=black!100,thick] (k) at (1,1) [label=right:$k$] {};
      \node[circle,draw, fill=black!100,thick] (l) at (0,2) [label=right:$l$] {};
      \draw[-] (j) -- (k);
      \draw[-] (k) -- (l);
  \end{tikzpicture} & $ f^J_Kf^K_Lf^L $ & $\sum b_j \beta_{j,k}\beta_{k,l}$ & $=$ & $1/6$ \\
\hline
  $e$ &
  \begin{tikzpicture}[scale=.5]
      [meagre/.style={circle,draw, fill=black!100,thick},
      fat/.style={circle,draw,thick}]
      \node[circle,draw, fill=black!100,thick] (j) at (1,0) [label=right:$j$] {};
      \node[circle,draw, fill=black!100,thick] (k) at (2,1) [label=right:$k$] {};
      \node[circle,draw, fill=black!100,thick] (l) at (1,1) [label=above:$l$] {};
      \node[circle,draw, fill=black!100,thick] (m) at (0,1) [label=left:$m$] {};
      \draw[-] (j) -- (k);
      \draw[-] (j) -- (l);
      \draw[-] (j) -- (m);
  \end{tikzpicture} & $ f^J_{KLM}f^Kf^Lf^M $ & $\sum b_j \alpha_{j,k}\alpha_{j,l}\alpha_{jm}$ & $=$ & $1/4$ \\
\hline
  $f$ &
  \begin{tikzpicture}[scale=.5]
      [meagre/.style={circle,draw, fill=black!100,thick},
      fat/.style={circle,draw,thick}]
      \node[circle,draw, fill=black!100,thick] (j) at (1,0) [label=right:$j$] {};
      \node[circle,draw, fill=black!100,thick] (k) at (2,1) [label=right:$k$] {};
      \node[circle,draw, fill=black!100,thick] (l) at (1,2) [label=right:$l$] {};
      \node[circle,draw, fill=black!100,thick] (m) at (0,1) [label=left:$m$] {};
      \draw[-] (j) -- (k);
      \draw[-] (k) -- (l);
      \draw[-] (j) -- (m);
  \end{tikzpicture} & $ f^J_{KM}f^K_Lf^Lf^M $ & $\sum b_j \alpha_{j,k}\beta_{k,l}\alpha_{j,m}$ & $=$ & $1/8$ \\
\hline
  $g_1$ &
  \begin{tikzpicture}[scale=.5]
      [meagre/.style={circle,draw, fill=black!100,thick},
      fat/.style={circle,draw,thick}]
      \node[circle,draw, fill=black!100,thick] (j) at (0,0) [label=right:$j$] {};
      \node[circle,draw, fill=black!100,thick] (k) at (1,1) [label=right:$k$] {};
      \node[circle,draw, fill=black!100,thick] (l) at (0,2) [label=above:$l$] {};
      \node[circle,draw, fill=black!100,thick] (m) at (2,2) [label=above:$m$] {};
      \draw[-] (j) -- (k);
      \draw[-] (k) -- (l);
      \draw[-] (k) -- (m);
  \end{tikzpicture} & $ f^J_Kf^K_{LM}f^Lf^M $ & $\sum b_j \alpha_{j,k}\alpha_{k,m}\alpha_{k,l}$ & $=$ & $1/12$ \\
  $g_2$ &
  \begin{tikzpicture}[scale=.5]
      [meagre/.style={circle,draw, fill=black!100,thick},
      fat/.style={circle,draw,thick}]
      \node[circle,draw, thick] (j) at (0,0) [label=right:$j$] {};
      \node[circle,draw, fill=black!100,thick] (k) at (1,1) [label=right:$k$] {};
      \node[circle,draw, fill=black!100,thick] (l) at (0,2) [label=above:$l$] {};
      \node[circle,draw, fill=black!100,thick] (m) at (2,2) [label=above:$m$] {};
      \draw[-] (j) -- (k);
      \draw[-] (k) -- (l);
      \draw[-] (k) -- (m);
  \end{tikzpicture} & $ \mathbf{A}_{JK}f^K_{LM}f^Lf^M $ & $\sum b_j \gamma_{j,k}\alpha_{k,m}\alpha_{k,l}$ & $=$ & $0$ \\
\hline
  $h$ &
  \begin{tikzpicture}[scale=.5]
      [meagre/.style={circle,draw, fill=black!100,thick},
      fat/.style={circle,draw,thick}]
      \node[circle,draw,fill=black!100, thick] (j) at (0,0) [label=left:$j$] {};
      \node[circle,draw, fill=black!100,thick] (k) at (1,1) [label=right:$k$] {};
      \node[circle,draw, fill=black!100,thick] (l) at (0,2) [label=left:$l$] {};
      \node[circle,draw, fill=black!100,thick] (m) at (1,3) [label=right:$m$] {};
      \draw[-] (j) -- (k);
      \draw[-] (k) -- (l);
      \draw[-] (l) -- (m);
  \end{tikzpicture} & $ f^J_Kf^K_Lf^L_Mf^M $ & $\sum b_j \beta_{j,k}\beta_{k,l}\beta_{l,m}$ & $=$ & $1/24$ \\
\hline
  \end{tabular}
  \end{center}
\end{figure}

\begin{figure}[htp]
  \label{fig:extraROK}
  \caption{Additional TK-trees and Rosenbrock-Krylov conditions for order five.}
  \begin{center}
  \begin{tabular}{|c|c|lcr|}
\hline
  \begin{tikzpicture}[scale=.5]
      [meagre/.style={circle,draw, fill=black!100,thick},
      fat/.style={circle,draw,thick}]
      \node[circle,draw,thick] (j) at (0,0) [label=right:$j$] {};
      \node[circle,draw, fill=black!100,thick] (k) at (1,1) [label=right:$k$] {};
      \node[circle,draw, fill=black!100,thick] (l) at (2,2) [label=right:$l$] {};
      \node[circle,draw, fill=black!100,thick] (m) at (1,2) [label=above:$m$] {};
      \node[circle,draw, fill=black!100,thick] (p) at (0,2) [label=left:$p$] {};
      \draw[-] (j) -- (k);
      \draw[-] (k) -- (l);
      \draw[-] (k) -- (m);
      \draw[-] (k) -- (p);
  \end{tikzpicture} & $ \mathbf{A}_{JK}f^K_{LMP}f^Lf^Mf^P $ & $\sum \gamma_{j,k} \alpha_{k,l} \alpha_{k,m} \alpha_{k,p} $ & $=$ & $0$ \\
\hline
   \begin{tikzpicture}[scale=.5]
      [meagre/.style={circle,draw, fill=black!100,thick},
      fat/.style={circle,draw,thick}]
      \node[circle,draw, thick] (j) at (0,0) [label=right:$j$] {};
      \node[circle,draw, fill=black!100,thick] (k) at (1,1) [label=right:$k$] {};
      \node[circle,draw, fill=black!100,thick] (l) at (0,2) [label=left:$l$] {};
      \node[circle,draw, fill=black!100,thick] (m) at (1,3) [label=above:$m$] {};
      \node[circle,draw, fill=black!100,thick] (p) at (2,2) [label=right:$p$] {};
      \draw[-] (j) -- (k);
      \draw[-] (k) -- (l);
      \draw[-] (k) -- (p);
      \draw[-] (l) -- (m);
  \end{tikzpicture} & $ \mathbf{A}_{JK}f^K_{LP}f^L_Mf^Mf^P $ & $\sum \gamma_{j,k}\alpha_{k,p} \alpha_{k,l} \alpha_{l,m}$ & $=$ & $0$ \\
\hline
   \begin{tikzpicture}[scale=.5]
      [meagre/.style={circle,draw, fill=black!100,thick},
      fat/.style={circle,draw,thick}]
      \node[circle,draw, thick] (j) at (1,0) [label=right:$j$] {};
      \node[circle,draw, fill=black!100,thick] (k) at (2,1) [label=right:$k$] {};
      \node[circle,draw, fill=black!100,thick] (l) at (1,2) [label=left:$l$] {};
      \node[circle,draw, fill=black!100,thick] (m) at (2,3) [label=above:$m$] {};
      \node[circle,draw, fill=black!100,thick] (p) at (0,3) [label=above:$p$] {};
      \draw[-] (j) -- (k);
      \draw[-] (k) -- (l);
      \draw[-] (l) -- (p);
      \draw[-] (l) -- (m);
  \end{tikzpicture} & $ \mathbf{A}_{JK}f^K_Lf^L_{MP}f^Pf^M $ & $\sum \gamma_{j,k} \alpha_{k,l} \alpha_{l,m} \alpha_{l,p}$ & $=$ & $0$ \\
\hline
   \begin{tikzpicture}[scale=.5]
      [meagre/.style={circle,draw, fill=black!100,thick},
      fat/.style={circle,draw,thick}]
      \node[circle,draw, thick] (j) at (1,0) [label=right:$j$] {};
      \node[circle,draw, thick] (k) at (2,1) [label=right:$k$] {};
      \node[circle,draw, fill=black!100,thick] (l) at (1,2) [label=left:$l$] {};
      \node[circle,draw, fill=black!100,thick] (m) at (2,3) [label=above:$m$] {};
      \node[circle,draw, fill=black!100,thick] (p) at (0,3) [label=above:$p$] {};
      \draw[-] (j) -- (k);
      \draw[-] (k) -- (l);
      \draw[-] (l) -- (p);
      \draw[-] (l) -- (m);
  \end{tikzpicture} & $ \mathbf{A}_{JK}\mathbf{A}_{KL}f^L_{MP}f^Pf^M $ & $\sum \gamma_{j,k} \gamma_{k,l} \alpha_{l,m} \alpha_{l,p}$ & $=$ & $0$ \\
\hline
   \begin{tikzpicture}[scale=.5]
      [meagre/.style={circle,draw, fill=black!100,thick},
      fat/.style={circle,draw,thick}]
      \node[circle,draw, fill=black!100, thick] (j) at (1,0) [label=right:$j$] {};
      \node[circle,draw,thick] (k) at (2,1) [label=right:$k$] {};
      \node[circle,draw, fill=black!100,thick] (l) at (1,2) [label=left:$l$] {};
      \node[circle,draw, fill=black!100,thick] (m) at (2,3) [label=above:$m$] {};
      \node[circle,draw, fill=black!100,thick] (p) at (0,3) [label=above:$p$] {};
      \draw[-] (j) -- (k);
      \draw[-] (k) -- (l);
      \draw[-] (l) -- (p);
      \draw[-] (l) -- (m);
  \end{tikzpicture} & $ f^J_K\mathbf{A}_{KL}f^L_{MP}f^Pf^M $ & $\sum \alpha_{j,k} \gamma_{k,l} \alpha_{l,m} \alpha_{l,p}$ & $=$ & $0$ \\
\hline
  \end{tabular}
  \end{center}
\end{figure}

\subsection{Rosenbrock-K methods of type 2}

\ \\

\begin{definition}
A Rosenbrock-K method of type 2 is given by Algorithm \ref{ROK-nonautonomous-step} and uses an enriched underlying Krylov subspace, where additional basis vectors are added to those in \eqref{eqn:krylovspace}. The additional basis vectors are chosen such that different elementary differentials associated with trees in $TK\backslash T$ are equal to those of similar trees in $T$. Consequently, the order conditions of a type 2 Rosenbrock-K method are the same as those of classical Rosenbrock methods.
\end{definition}

For example, consider the tree $g_2$ in Table \ref{fig:ROKtrees}. The corresponding term in the Taylor series of the solution is
$\mathbf{A}_n \cdot f_{y,y}(f_n,f_n)$\,.
The application of the second derivative tensor $f_{y,y}$  to a pair of function values results in the vector
\begin{equation}
\label{eqn:second-derivative}
u_{g_2}^k = \left( f_{y,y}(f_n,f_n) \right)^k = \sum_{\ell,m=1}^N \left. \frac{\partial^2 f^k}{\partial y^\ell\, \partial y^m}\right|_{y=y_n}\, f^\ell(y_n)\, f^m(y_n)\,, \quad
k=1,\dots,N\,.
\end{equation}
To obtain a type 2 Rosenbrock-K method of order four the Krylov space \eqref{eqn:krylovspace} is extended as follows:
\begin{eqnarray}
\label{eqn:krylovspace-extended}
	\K_{M+2}(\mathbf{J}_n\,,f_n) &=& \textnormal{span}\left\{\, f_n, \, \mathbf{J}_n\,f_n, \, \dots, \mathbf{J}_n^{M-1}\,f_n\,, u_{g_2}\,, \mathbf{J}_n\, u_{g_2}  \, \right\} \\
\nonumber
	&=& \textnormal{span}\left\{v_1, v_2,  \dots, v_{M+2}\right\} \,.
\end{eqnarray}
The Jacobian approximation is $\mathbf{A}_n = \mathbf{V}_{n;M}\, \mathbf{H}_{n;M}\, \mathbf{V}_{n;M}^T$ where $\mathbf{V}_{n;M} \in \R^{N \times (M+2)}$, and $\mathbf{H}_{n;M} = \mathbf{V}_{n;M}^T\, \mathbf{J}_n\, \mathbf{V}_{n;M} \in \R^{(M+2) \times (M+2)}$ is no longer upper Hessenberg. 

The construction \eqref{eqn:krylovspace-extended} ensures that the elementary differential of the tree $g_2 \in TW \backslash T$ coincides with the
elementary differential of a regular Butcher tree:
\[
\mathbf{A}_n \, f_{y,y}(f_n,f_n) = \mathbf{A}_n \, u_{g_2} = \mathbf{V}_{n;M}\, \mathbf{V}_{n;M}^T \, \mathbf{J}_n \underbrace{\mathbf{V}_{n;M}\, \mathbf{V}_{n;M}^T \,  u_{g_2}}_{=u_{g_2}}
= \mathbf{V}_{n;M}\, \mathbf{V}_{n;M}^T \, \mathbf{J}_n\,  u_{g_2}  = \mathbf{J}_n\,  u_{g_2} = \mathbf{J}_n\, f_{y,y}(f_n,f_n) \,.
\]
We have the following interesting consequences.
\begin{remark}
Any classical Rosenbrock method of order four (or higher) becomes a
type 2 Rosenbrock-K method of order four when the Jacobian approximation \eqref{eqn:ROK-approximation} uses a  six-dimensional extended Krylov space (more exactly, when the underlying Krylov space is \eqref{eqn:krylovspace-extended} with $M \ge 4$).
\end{remark}
\begin{remark}
General Rosenbrock-K methods of any order can be obtained by combining the type 2 and type 1 approaches.
Specifically, some of the trees in $TK\backslash T$ are recolored (to obtain the similar trees in $T$) by extending the underlying Krylov space, i.e., by using a type 2 approach. The elementary differentials corresponding to the remaining trees in $TK\backslash T$ are then cancelled by imposing additional type 1 order conditions.
\end{remark}

\subsection{Implementation aspects} \label{sec:implementation}

%
%

The cost of Rosenbrock-K integration for large-scale systems is dominated by the cost of building the Krylov space at each step.
The Arnoldi iteration requires $M$ Jacobian-vector products, as well as vector operations totaling $\mathcal{O}(M^2 N)$ operations during orthogonalization \cite{Saad}.  
Both Jacobian-vector products and vector operations can be efficiently parallelized.

Jacobian-vector can be obtained in several ways.  Most straightforwardly the entire Jacobian matrix can be constructed and then  a Jacobian-vector product can be calculated in the usual way.  This process is expensive both in terms of storage and computation.

An alternative is to implement a routine that computes directly Jacobian-vector products without building the Jacobian matrix. Such a routine can be obtained through forward-mode automatic differentiation \cite{Griewank2000EDP} and its cost is similar to the ODE function computations.  
Large distributed applications rely on an infrastructure which partitions the solution vector $y_n$ across
nodes.  The computation of the ODE function $f_n$ is done in parallel. 
Data exchange of data among subdomains is needed
in order to fulfill grid dependencies.  
Exact Jacobian-vector products $J_n \cdot u$ are obtained by linearizing the ODE function 
$f_n$ in the direction $u$. Therefore Jacobian-vector operations can be computed element by element,
inherit the parallel structure of the ODE function calculation (e.g., 
parallelism obtained by domain decomposition), and can be implemented very efficiently using the same 
parallel software infrastructure. 
The same data partitioning can be used for both the solution $y_n$ and the vector $u$. Note that successive Arnoldi iterations 
act on the distributed vectors $u$ without any need for global communication (only local communication of the 
boundary elements is needed at each iteration).

Jacobian-vector products can also be approximated by finite differences of the form \cite{Keyes_2004_JFNK}
\begin{equation}
\label{eqn:jacvec}
\mathbf{J}_n\, u \approx \frac{f(t_n,y_n + \delta\, u) - f(t_n,y_n)}{\delta}\,.
\end{equation}
The increment $\delta$ is related to machine precision. Equation \eqref{eqn:jacvec} is sometimes referred to as a ``matrix-free''
approximation. For example, ``Jacobian-free Newton-Krylov'' methods \cite{Keyes_2004_JFNK} employ the approximation  
\eqref{eqn:jacvec} within linear Krylov space solvers in the context of Newton iterations for nonlinear systems.
Clearly the finite difference approximation  \eqref{eqn:jacvec} uses the same data partitioning for 
$y_n$ and $u$, and inherits the parallel performance of the ODE function calculation.

Finite differences can also be used to approximate higher order derivatives.
For example, the second derivative term \eqref{eqn:second-derivative} can approximated by finite differences of Jacobian-vector products, as follows:
\begin{eqnarray*}
f_{y,y}(u,u) &\approx&  \frac{f_y(t_n,y_n + \delta\, u)\cdot u - \mathbf{J}_n\cdot u}{\delta},
\end{eqnarray*}
and where each Jacobian-vector product can also be approximated,

\subsection{Errors due to finite difference approximations} \label{sec:finite-diff}

An analysis of matrix-free Newton-Krylov methods is provided in \cite[Theorem 2.3]{Brown_2008_JFNK}. Assume that the Arnoldi process with
the exact Jacobian-vector products produces $\mathbf{H}_{n;M}$, $\mathbf{V}_{n;M}$, while the Arnoldi process using finite difference approximations \eqref{eqn:jacvec} produces $\tilde{\mathbf{H}}_{n;M}$ and $\tilde{\mathbf{V}}_{n;M}$. The errors in the finite difference approximations \eqref{eqn:jacvec} during each Arnoldi iteration are
\begin{eqnarray*}
e_i &=&  \frac{f(t_n, y_n + \delta_i\, \tilde{v}_i) - f(t_n,y_n)}{\delta_i} - \mathbf{J}_n\cdot \tilde{v}_i, \quad i=1,\dots,M-1\,.
\end{eqnarray*}
Collect these error vectors, together with $e_0=0$ (the error in computing $\tilde{v}_1=f_n/\Vert f_n \Vert$), in the matrix 
\[
\mathbf{E} = [e_0,e_1,\dots,e_{M-1}] \in \R^{N \times M}\,.
\]
According to \cite[Theorem 2.3]{Brown_2008_JFNK}, the matrices $\tilde{\mathbf{H}}_{n;M}$ and $\tilde{\mathbf{V}}_{n;M}$
can be obtained by an application of the {\it exact} Arnoldi process (i.e., with exact Jacobian-vector products) to obtain a basis of
the modified space
\[
\mathcal{K}_M\left( \tilde{\mathbf{J}}_n, f_n \right)\, \quad \textnormal{with} \quad \tilde{\mathbf{J}}_n := \mathbf{J}_n+\mathbf{E}\, \tilde{\mathbf{V}}_{n;M}^T\,.
\]
According to Lemma \ref{lemma:ROK-matrix}, the matrix approximation $\tilde{\mathbf{A}_n}= \tilde{\mathbf{V}}_{n;M} \tilde{\mathbf{H}}_{n;M} \tilde{\mathbf{V}}_{n;M}^T$ has the following property
for any $0 \le k \le M-1$
\begin{eqnarray*}
\tilde{\mathbf{A}}_n^k\, f_n &=& \tilde{\mathbf{J}}_n^k\, f_n\\
&=&\left( \mathbf{J}_n+\mathbf{E}\, \tilde{\mathbf{V}}_{n;M}^T\right)^k \, f_n\\
&=&  \mathbf{J}_n^k \, f_n + \sum_{i=0}^{k-1} \begin{pmatrix} k \\ i \end{pmatrix}\, \mathbf{J}_n^{i}\, (\mathbf{E}\, \tilde{\mathbf{V}}_{n;M}^T)^{k-i} \,  f_n  \\
&=&  \mathbf{J}_n^k \, f_n + \mathcal{O}(\Vert \mathbf{E} \Vert)\,,
\end{eqnarray*}
where for the last equality we have made the assumption that the Jacobian powers are uniformly bounded.

When exact Jacobian-vector products are used no additional type 1 Rosenbrock-K order conditions are imposed for trees in $TW\backslash TK$.
Similarly, when higher derivatives are computed exactly no additional order conditions are needed for type 2 methods.
When finite difference approximations are used, however, the elementary differentials of trees in $TW\backslash TK$ appear in the
expansion of the numerical solution with nonzero coefficients of size $\mathcal{O}(\Vert \mathbf{E} \Vert)$, i.e., of the size of the
{\it absolute} errors incurred in the finite difference approximations. If the finite difference approximations are not sufficiently accurate 
the order of the Rosenbrock-K methods may be lost.

For example, consider the tree $d_2 \in TW \backslash TK$ in Figure \ref{fig:ROWtrees}.
When finite differences are used, it contributes the following
$\mathcal{O}\left(\Vert \mathbf{E} \Vert\, h^2\right)$  term to the local error 
\begin{equation}
\label{eqn:ROW-2}
h^2\, \left(\sum_{j=1}^s b_j \gamma_{j,k}\right) (\mathbf{E}\, \tilde{v}_1) \, \Vert f_n \Vert\,.
\end{equation}
In order to ensure that the Rosenbrock-K method preserves its order $p$, a sufficient condition is that the finite difference errors are bounded by
\[
\Vert \mathbf{E} \Vert \le C\, h^{p-1}\,.
\]
Without assuming the uniform boundedness of the Jacobian a sufficient condition is $\Vert \mathbf{J}_n^{i}\, (\mathbf{E}\, \tilde{\mathbf{V}}_{n;M}^T)^{k-i} \Vert \le C\, h^{p-1}$, $i=0,\dots,k-1$.
When the exact Jacobian-vector products are unavailable, and when the finite difference approximations cannot be computed this accurately, it may be advantageous to  choose Rosenbrock-K methods whose coefficients satisfy the full set of Rosenbrock-W conditions.


\section{Construction of Rosenbrock-K methods of order four} \label{sec:solvingorderconds}
We now construct practical type 1 Rosenbrock-K methods of order four. We consider the case with $\gamma_{i,i} = \gamma$ for all $i$, and denote 
\[
	\beta_i = \displaystyle\sum_{j=1}^{i} \beta_{i,j} = \alpha_i + \gamma_i \,, \quad
	\beta_i' = \displaystyle\sum_{j=1}^{i-1} \beta_{i,j} \,.
\]
We examine numerically the linear stability properties of the resulting methods when using the exact Jacobian so that $\mathbf{A}_n = \mathbf{J}_n$.  Rosenbrock-K methods share the same stability function with classical Rosenbrock methods 
\begin{equation}
\label{eqn:stabilityfunc}
R(z) = 1 + zb^T\left(\mathbf{I}_{s \times s} - z\beta\right)^{-1}\mathds{1}\,,
\end{equation}
where $\mathds{1} \in \R^s$ is a vector of ones.

\subsection{ROK4a: a four stages, fourth order, L-stable Rosenbrock-K method}
We start with constructing a four stages,  fourth order Rosenbrock-K method.
The order conditions are as follows:
\begin{equation}
\label{eqn:ocfours}
\renewcommand{\arraystretch}{1.5}
\begin{array}{clcl}
	(a) & b_1 + b_2 + b_3 + b_4 & = & 1 \\
	(b) & b_2 \beta_2' + b_3 \beta_3' + b_4 \beta_4' & = & \frac{1}{2} - \gamma = p_{21}(\gamma)\\
	(c) & b_2 \alpha_2^2 + b_3 \alpha_3^2 + b_4 \alpha_4^2 & = &  \frac{1}{3} \\
	(d) & b_3(\beta_{3,2} \beta_2') + b_4(\beta_{4,2}\beta_2' + \beta_{4,3}\beta_3') & = & \frac{1}{6} - \gamma + \gamma^2 = p_{3,2}(\gamma) \\
	(e) & b_2 \alpha_2^3 + b_3\alpha_3^3 + b_4\alpha_4^3 & = & \frac{1}{4} \\
	(f) & b_3\alpha_3\alpha_{3,2}\beta_2' + b_4\alpha_4(\alpha_{4,2}\beta_2' + \alpha_{4,3}\beta_3) & = & \frac{1}{8} - \frac{1}{3} \gamma = p_{4,2}(\gamma) \\
	(g_1) & b_3\alpha_{3,2}\alpha_2^2 + b_4(\alpha_{4,2}\alpha_2^2 + \alpha_{4,3}\alpha_3^2) & = & \frac{1}{12} \\
	(g_2) & b_3\gamma_{3,2}\alpha_2^2 + b_4(\gamma_{4,2}\alpha_2^2 + \gamma_{4,3}\alpha_3^2) & = & -\frac{1}{3}\gamma \\
	(h) & b_4\beta_{4,3}\beta_{3,2}\beta_{2}' & = & \frac{1}{24} - \frac{1}{2}\gamma + \frac{3}{2}\gamma^2 - \gamma^3 = p_{4,4}(\gamma)
\end{array}
\end{equation}
To solve \eqref{eqn:ocfours} we follow the solution process outlined in \cite[Section IV.7]{Hairer_book_II}.  First we choose $\gamma = 0.572816062482135$ so that $R(\infty) = 0$, where $R(z)$ is the stability function of the method.  We then treat equations (\ref{eqn:ocfours}.a), (\ref{eqn:ocfours}.c), and (\ref{eqn:ocfours}.e) separately, as a linear system in $b_i$'s.  We make the arbitrary choices
\[ b_3 = 0, \quad \alpha_2 = \frac{1}{2}, \ \  \alpha_3 = 1, \quad \alpha_4 = 1\,, \]
and the solve the system
\begin{equation*}
\label{eqn:step24s}
	\left[\begin{array}{ccc} 1 & 1 & 1 \\ 0 & \alpha_2^2 & \alpha_4^2 \\ 0 & \alpha_2^3 & \alpha_4^3 \end{array} \right]  \left[ \begin{array}{c} b_1 \\ b_2 \\ b_4 \end{array} \right] = \left[ \begin{array}{c} 1 \\ \frac{1}{3} \\ \frac{1}{4} \end{array} \right]
\end{equation*}
to obtain $b_1, b_2$, and $b_4$.

In order to allow for the existence of an embedded method of order three we require that the third order conditions are not satisfied uniquely.  The following equation guarantees that by setting the determinant of the  system of third order conditions to zero:
\begin{equation}
\label{eqn:embedcond}
	(\beta_2' \alpha_4^2 - \beta_4'\alpha_2)\, \beta_{3,2}\, \beta_2' = (\beta_2' \alpha_3^2 - \beta_3' \alpha_2^2)\displaystyle\sum_{j=2}^3 \beta_{4,j}\beta_j'\,.
\end{equation}

We now take $\beta_{4,3}$ as a free parameter and compute $\beta_{3,2}\beta_2'$ from (\ref{eqn:ocfours}.h) and $(\beta_{4,2}\beta_2' + \beta_{4,3}\beta_3')$ from (\ref{eqn:ocfours}.d).  Inserting these expressions into (\ref{eqn:embedcond}) yields a relation between $\beta_2'$, $\beta_3'$, $\beta_4'$.  Eliminating $(b_4\beta_{4,2} + b_3\beta_{3,2})$ from (\ref{eqn:ocfours}.d), and from \{(\ref{eqn:ocfours}.$g_1$) + (\ref{eqn:ocfours}.$g_2$)\}, yields a second relation.  A third relation is obtained from (\ref{eqn:ocfours}.b), and this leads to the following system for $\beta_2'$, $\beta_3'$, $\beta_4'$:
\begin{equation*}
\label{eqn:step34s}
\left[\begin{array}{ccc} \alpha_4^2 \frac{p_{4,4}}{b_4\beta_{4,3}} - \alpha_3^2 \frac{p_{3,2}}{b_4} & \alpha_2^2 \frac{p_{3,2}}{b_4} & -\alpha_2^2\frac{p_{4,4}}{b_4\beta_{4,3}} \\ b_2 & b_3 & b_4 \\ b_4\beta_{4,3}\alpha_3^2 - p_{4,3} & -b_4\beta_{4,3}\alpha_2^2 & 0 \end{array}\right] \left[ \begin{array}{c} \beta_2' \\ \beta_3' \\ \beta_4' \end{array}\right] = \left[  \begin{array}{c} 0 \\ p_{2,1} \\ -\alpha_2^2 p_{3,2} \end{array} \right]\,.
\end{equation*}
Here we make the arbitrary choice
\[ \beta_{43} = -\frac{1}{4} \]
and compute $\beta_{3,2}$ and $\beta_{4,2}$ from 
\begin{equation*}
\label{eqn:step44s}
	\beta_{3,2} = \frac{p_{4,4}}{b_4\beta_{4,3}\beta_2'}, \quad  \beta_{4,2} = \frac{p_{3,2} - b_4\beta_{4,3}\beta_3'}{b_4\beta_2'}\,.
\end{equation*}
Next we impose directly equations (\ref{eqn:ocfours}.f), (\ref{eqn:ocfours}.$g_1$), and (\ref{eqn:ocfours}.$g_2$) along with the definition of $\beta_{i,j}$:
\begin{equation*}
\label{eqn:step54s}
	\begin{array}{lcl}
	 b_3\alpha_{3,2}\beta_2' + b_4(\alpha_{4,2}\beta_2' + \alpha_{4,3}\beta_3) & = & p_{4,2} \\
	 b_3\alpha_{3,2}\alpha_2^2 + b_4(\alpha_{4,2}\alpha_2^2 + \alpha_{4,3}\alpha_3^2) & = & \frac{1}{12} \\
	 b_3\gamma_{3,2}\alpha_2^2 + b_4(\gamma_{4,2}\alpha_2^2 + \gamma_{4,3}\alpha_3^2) & = & -\frac{1}{3}\gamma \\
	\gamma_{3,2} + \alpha_{3,2} & = & \beta_{3,2} \\
	\gamma_{4,2} + \alpha_{4,2} & = & \beta_{4,2} \\
	\gamma_{4,3} + \alpha_{4,3} & = & \beta_{4,3} \\
	\end{array}
\end{equation*}
Finally $\alpha_{i,1}$ and $\beta_{i,1}$ follow immediately from the definition of $\alpha_i$ and $\beta_i'$ respectively. The coefficient values for this method, named ROK4a, are given in Table \ref{table:ROK4a-coef}.

\begin{table}
\begin{center}
  \begin{tabular}{|rcrrcr|}
\hline
\multicolumn{6}{|l|}{   $\gamma$  =   0.572816062482135 } \\
\hline
   $\alpha_{2,1}$ & = & 1 		  				& $\gamma_{2,1}$ & = &-1.91153192976055097824 \\
   $\alpha_{3,1}$ & = & 0.10845300169319391758 	& $\gamma_{3,1}$ & = & 0.32881824061153522156 \\
   $\alpha_{3,2}$ & = & 0.39154699830680608241 	& $\gamma_{3,2}$ & = & 0.0 \\
   $\alpha_{4,1}$ & = & 0.43453047756004477624	& $\gamma_{4,1}$ & = & 0.03303644239795811290 \\
   $\alpha_{4,2}$ & = & 0.14484349252001492541	& $\gamma_{4,2}$ & = &-0.24375152376108235312 \\
   $\alpha_{4,3}$ & = &-0.07937397008005970166 	& $\gamma_{4,3}$ & = &-0.17062602991994029834 \\
\hline
   $b_1$ 	 & = & 0.16666666666666666667 	& $\widehat{b}_1$   & = & 0.50269322573684235345 \\
   $b_2$	 & = & 0.16666666666666666667 	& $\widehat{b}_2$   & = & 0.27867551969005856226 \\
   $b_3$	 & = & 0.0		  			& $\widehat{b}_3$   & = & 0.21863125457309908428 \\
   $b_4$	 & = & 0.66666666666666666667 	& $\widehat{b}_4$   & = & 0.0		     \\
\hline
  \end{tabular}
\caption{Coefficients of ROK4a, a fourth order, L-stable, type 1 Rosenbrock-K method.}
\label{table:ROK4a-coef}
\end{center}
\end{table}
The choice of $\gamma$ ensures that for the main method  $R(\infty) = 0$.  The embedded method has $\widehat{R}(\infty) = -0.55$ Figure \ref{fig:stability-ROK4} shows the stability function values for both the main and embedded methods along the imaginary axis. We see that the absolute function values are below one, which implies that the main {\sc Rok4}a method is L-stable, and the embedded method is strongly A-stable (i.e., $|\widehat{R}(\infty)| < 1$).

It is important to note that the stability results presented here apply to the case where a full Jacobian is used, and does not account for the impact of Krylov approximation.  
The impact of the Krylov approximation on the stability will be the subject of future work.

Exact stability requirements when making use of the Krylov approximation of the Jacobian are as yet undetermined, though a result by Wensch in \cite{Wensch_2005_DAE} gives reason to believe that the size of the Krylov subspace must be as large as the number of stiff variables in the underlying problem.

\subsection{ROK4b: a six stages,  fourth order stiffly accurate Rosenbrock-K method}\label{sec:Rok4b}

Stiff accuracy is a desirable property when solving very stiff systems or index-1 differential algebraic equations. A stiffly accurate Rosenbrock method \cite[Section IV.4]{Hairer_book_II} is characterized by the property 
\[
b_i = \beta_{s,i}\,, \quad i=1,\dots, s\,.
\]
We have derived a six-stage, stiffly accurate, fourth-order Rosenbrock-K method, named ROK4b.   For brevity we do not show here the order conditions, nor we present the solution method. The coefficients have been obtained through a process similar to that outlined in Section \ref{sec:Rok4p}.  The {\sc Rok4}b method coefficients are shown in Table \ref{table:ROK4b-coef}.
\begin{table}
\begin{center}
  \begin{tabular}{|rcrrcr|}
\hline
\multicolumn{6}{|l|}{   $\gamma$  =  0.31 } \\
\hline
   $\alpha_{2,1}$ & = &  1.0 		 			& $\gamma_{2,1}$ & = & -22.824608269858540		   \\
   $\alpha_{3,1}$ & = &  0.530633333333333 		& $\gamma_{3,1}$ & = & -69.343635255712726 \\
   $\alpha_{3,2}$ & = & -0.030633333333333		& $\gamma_{3,2}$ & = & -0.030633333333333 \\
   $\alpha_{4,1}$ & = &  0.894444444444444 		& $\gamma_{4,1}$ & = &  404.7106882480958  \\
   $\alpha_{4,2}$ & = &  0.055555555555556		& $\gamma_{4,2}$ & = &  0.055555555555556		   \\
   $\alpha_{4,3}$ & = &  0.05			  		& $\gamma_{4,3}$ & = &  0.05  \\ 
   $\alpha_{5,1}$ & = &  0.738333333333333  		& $\gamma_{5,1}$ & = & -0.571666666666667  \\
   $\alpha_{5,2}$ & = & -0.121666666666667	  	& $\gamma_{5,2}$ & = & -0.121666666666667 \\
   $\alpha_{5,3}$ & = &  0.333333333333333  		& $\gamma_{5,3}$ & = &  0.333333333333333	   \\
   $\alpha_{5,4}$ & = &  0.05 					& $\gamma_{5,4}$ & = &  0.05  \\
   $\alpha_{6,1}$ & = & -0.096929102825711		& $\gamma_{6,1}$ & = &  0.263595769492377  \\ 
   $\alpha_{6,2}$ & = & -0.121666666666667		& $\gamma_{6,2}$ & = & -0.121666666666667  \\
   $\alpha_{6,3}$ & = &  1.045582889789120		& $\gamma_{6,3}$ & = & -0.378916223122453  \\
   $\alpha_{6,4}$ & = &  0.173012879703258		& $\gamma_{6,4}$ & = & -0.073012879703258  \\
   $\alpha_{6,5}$ & = &  0.0					& $\gamma_{6,5}$ & = &  0 \\
\hline
   $b_1$ 	 & = &  0.166666666666667		 		& $\widehat{b}_1$	  & = &  0.166666666666667 \\
   $b_2$	 & = & -0.243333333333333 				& $\widehat{b}_2$   & = & -0.243333333333333 \\
   $b_3$	 & = &  0.666666666666667		  		& $\widehat{b}_3$   & = &  0.666666666666667 \\
   $b_4$	 & = &  0.100000000000000 				& $\widehat{b}_4$	  & = &  0.1 \\
   $b_5$	 & = &  0.0			 				& $\widehat{b}_5$   & = &  0.31 		   \\
   $b_6$  & = &  0.31							& $\widehat{b}_6$   & = &  0.0 \\
\hline
  \end{tabular}
\caption{Coefficients of ROK4b, a fourth order, stiffly accurate, type 1 Rosenbrock-K method.}
\label{table:ROK4b-coef}
\end{center}
\end{table}
ROK4b has the additional benefit of both the main and embedded methods are L-stable.  Figure \ref{fig:stability-ROK4} shows the stability functions of the main and embedded methods of {\sc Rok4}b evaluated along the imaginary axis.

\subsection{ROK4p: a five stages,  fourth order, parabolic Rosenbrock-K method}\label{sec:Rok4p}

Due to their low stage order Rosenbrock methods can be marred by order reduction when solving initial value problems arising from the semi-discretization of PDEs.  
The following set of additional conditions guarantees the full order of convergence for Rosenbrock methods applied to semi-discrete {\it parabolic} PDEs  \cite{Lubich_1995_linearlyImplicit,Novati_2008_secantROW}:
\begin{equation}
\label{eqn:paracond}
	b^T\, \beta^j\, \left(2\beta^2 \mathds{1} - \alpha^2 \right) = 0 \quad \textrm{for}~~ p-2 \leq j \leq s-1 ~~ \textrm{and} ~~  p \geq 3\,.
\end{equation}
Here $b=(b_i)_{i=1,\dots,s}$,  $\alpha=(\alpha_i)_{i=1,\dots,s}$,, $\beta=(\beta_i)_{i=1,\dots,s}$, $p$ is the order of the method, and $s$ is the number of stages. Multiplications are understood component-wise. We will call a Rosenbrock method {\it parabolic} if it satisfies \eqref{eqn:paracond}.

The order conditions for a five-stage, fourth-order, parabolic Rosenbrock-K are:
\begin{equation}
\label{eqn:ocfives}
\renewcommand{\arraystretch}{1.5}
\begin{array}{clcl}
 (a) & b_1 + b_2 + b_3 + b_4 + b_5 & = & 1 \\
 (b) & b_2 \beta_2' + b_3 \beta_3' + b_4\beta_4'+ b_5\beta_5' & = & p_{2,1}(\gamma) \\
 (c) & b_2\alpha_2^2 + b_3\alpha_3^2 + b_4\alpha_4^2 + b_5\alpha_5^2 & = & \frac{1}{3} \\
 (d) & b_3\beta_{3,2}\beta_2' + b_4(\beta_{4,2}\beta_2' + \beta_{4,3}\beta_3') & \ & \\
     & \indent +b_5(\beta_{5,2}\beta_2' + \beta_{5,3}\beta_3' + \beta_{5,4}\beta_4') & = & p_{3,2}(\gamma) \\
 (e) & b_2\alpha_2^3 + b_3\alpha_3^3 + b_4\alpha_4^3 + b_5\alpha_5^3 & = & \frac{1}{4} \\
 (f) & b_3\alpha_3\alpha_{3,2}\beta_2' + b_4\alpha_4(\alpha_{4,2}\beta_2' + \alpha_{4,3}\beta_3') & \ & \\
     & \indent + b_5\alpha_5(\alpha_{5,2}\beta_2' + \alpha_{5,3}\beta_3' + \alpha_{5,4}\beta_4') & = & p_{4,2}(\gamma) \\
 (g_1) & b_3\alpha_{3,2}\alpha_2^2 + b_4(\alpha_{4,2}\alpha_2^2 + \alpha_{4,3}\alpha_3^2) & \ & \\
     & \indent + b_5(\alpha_{5,2}\alpha_2^2 + \alpha_{5,3}\alpha_3^2 + \alpha_{5,4}\alpha_4^2) & = & \frac{1}{12} \\
 (g_2) & b_3\gamma_{3,2}\alpha_2^2 + b_4(\gamma_{4,2}\alpha_2^2 + \gamma_{4,3}\alpha_3^2) & \ & \\
     & \indent + b_5(\gamma_{5,2}\alpha_2^2 + \gamma_{5,3}\alpha_3^2 + \gamma_{5,4}\alpha_4^2) & = & -\frac{1}{3}\gamma \\
 (h) & b_4 \beta_{4,3}\beta_{3,2}\beta_2' + b_5(\beta_{5,3}\beta_{3,2}\beta_2' + \beta_{5,4}\beta_{4,2}\beta_2' + \beta_{5,4}\beta_{4,3}\beta_3' & = & p_{4,4}(\gamma) \\
 (i) & 2b_5\beta_{5,4}\beta_{4,3}\beta_{3,2}\beta_2' - b_4\beta_{4,3}\beta_{3,2}\alpha_2^2 - b_5\beta_{5,3}\beta_{3,2}\alpha_2^2 & \ & \\
     & \indent - b_5\beta_{5,4}\beta_{4,2}\alpha_2^2 - b_5\beta_{5,4}\beta_{4,3}\alpha_3^2 & = & \pi_1(\gamma) \\
 (j) & b_5\beta_{5,4}\beta_{4,3}\beta_{3,2}\alpha_2^2 & = & \pi_2(\gamma) \\
 (k) & 0 & = & \pi_3(\gamma)
\end{array}
\end{equation}
where the polynomials $p_{i,j}(\gamma)$ are defined  in (\ref{eqn:ocfours}), and 
\begin{eqnarray*}
   \pi_1(\gamma) & = & 2\gamma p_{4,3} - 8\gamma p_{4,4}(\gamma) + \frac{1}{3}\gamma^2 - 12\gamma^2 p_{3,2}(\gamma) - 8\gamma^3p_{2,1}(\gamma) - 2\gamma^4 \,, \\
   \pi_2(\gamma) & = & 3\gamma \pi_1(\gamma) - 3\gamma^2 p_{4,3}(\gamma) + 20\gamma^2 p_{4,4}(\gamma) - \frac{1}{3}\gamma^3 + 20\gamma^3 p_{3,2}(\gamma)  \\
   & &  \indent + 10\gamma^4 p_{2,1}(\gamma) + 2\gamma^5 \,, \\
   \pi_3(\gamma) & = & -4\gamma \pi_2(\gamma) + 6\gamma^2 \pi_1(\gamma) - 4\gamma^3 p_{4,3}(\gamma) + 40\gamma^3p_{4,4}(\gamma) - \frac{1}{3}\gamma^4  \\
   & & \indent  + 30\gamma^4p_{3,2}(\gamma) + 12\gamma^5p_{2,1}(\gamma) + 2\gamma^6\,.
\end{eqnarray*}
The approach to solve the system of equations (\ref{eqn:ocfives}) is similar to that used for (\ref{eqn:ocfours}).  A sequence of linear systems is constructed, and for each system arbitrary choices are made for the values of some parameters.  A numerical genetic optimization algorithm is employed to select free parameter values which lead to method coefficients of acceptable magnitudes.  The coefficients of the resulting method, named ROK4p, are given in Table \ref{table:ROK4p-coef}.

\begin{table}
\begin{center}
  \begin{tabular}{|rcrrcr|}
\hline
\multicolumn{6}{|l|}{   $\gamma$  =  0.572816062482135 } \\
\hline
   $\alpha_{2,1}$ & = & 0.757900000000000 		& $\gamma_{2,1}$ & = & -0.757900000000000 		   \\
   $\alpha_{3,1}$ & = & 0.170400000000000 		& $\gamma_{3,1}$ & = & -0.295086678808293  \\
   $\alpha_{3,2}$ & = & 0.821100000000000			& $\gamma_{3,2}$ & = &  0.178900000000000  \\
   $\alpha_{4,1}$ & = & 1.196218621274069			& $\gamma_{4,1}$ & = & -1.836333117783808  \\
   $\alpha_{4,2}$ & = & 0.297700000000000			& $\gamma_{4,2}$ & = & -0.247700000000000 \\
   $\alpha_{4,3}$ & = &-1.433618621274069  		& $\gamma_{4,3}$ & = &  1.681409044712106 \\
   $\alpha_{5,1}$ & = &-0.010650410785863  		& $\gamma_{5,1}$ & = & -0.197089800872483  \\
   $\alpha_{5,2}$ & = & 0.142100000000000			& $\gamma_{5,2}$ & = & -0.684644029868020 \\
   $\alpha_{5,3}$ & = &-0.129349589214137  		& $\gamma_{5,3}$ & = &  0.166330242942910		   \\
   $\alpha_{5,4}$ & = & 0.392800000000000  		& $\gamma_{5,4}$ & = &  0.000000000000000  \\
\hline
   $b_1$ 	 & = & 0.056000000000000		 		& $\widehat{b}_1$	  & = & -0.186875355621256 \\
   $b_2$	 & = & 0.116601238130482  				& $\widehat{b}_2$   & = & -0.250433793031115 \\
   $b_3$	 & = & 0.160300000000000				& $\widehat{b}_3$   & = &  0.326360736478684 \\
   $b_4$	 & = &-0.031109354304222  				& $\widehat{b}_4$	  & = &  0.110948412173687 \\
   $b_5$	 & = & 0.698208116173739  				& $\widehat{b}_5$   & = &  1.000000000000000		   \\
\hline
  \end{tabular}
\caption{Coefficients of ROK4p, a fourth order, parabolic, type 1 Rosenbrock-K method.}
\label{table:ROK4p-coef}
\end{center}
\end{table}
The choice of $\gamma$ ensures that for the main method  $R(\infty) = 0$.  The embedded method has $\widehat{R}(\infty) = 0.24$. Figure \ref{fig:stability-ROK4} shows the stability function values for both the main and embedded methods along the imaginary axis. We see that the absolute function values are below one, which implies that the main {\sc Rok4}p method is L-stable, and the embedded method is strongly A-stable.
\begin{figure}[htp]
\centering
\includegraphics[width=5.5in,height=2.0in]{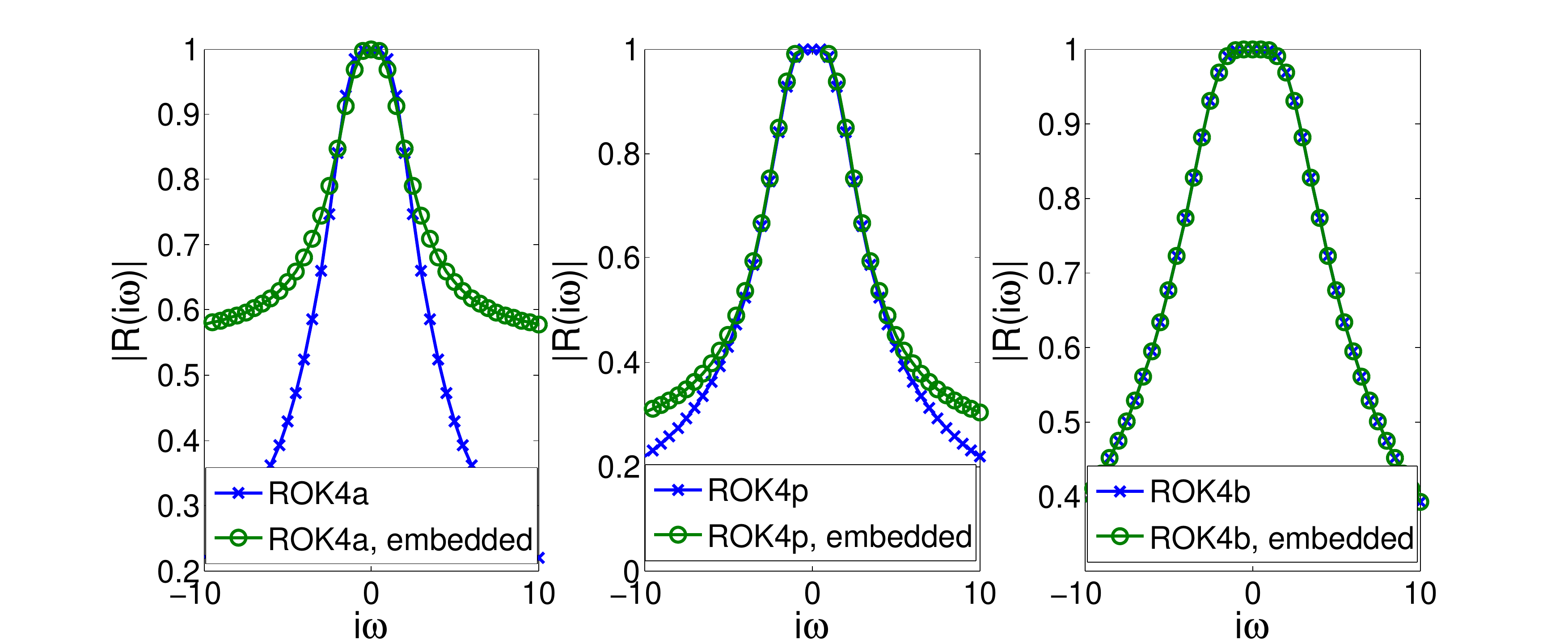}
\caption{Stability functions for the main and embedded methods of {\sc Rok4}a, {\sc Rok4}p, and {\sc Rok4}b.}
\label{fig:stability-ROK4}
\end{figure}
%

\section{Numerical Results} \label{sec:numericalresults}

Here we present some results from numerical experiments verifying the properties of the methods discussed above, as well as comparing performance of Rosenbrock-Krylov methods with several standard classical Rosenbrock and Rosenbrock-W methods.  {\sc Rang3} is a third order Rosenbrock-W method \cite{Rang_2005_ROW3}, {\sc Rodas4} is a fourth order, stiffly accurate classical Rosenbrock method \cite[Section IV.10]{Hairer_book_II}, and {\sc Ros4} is a fourth order, L-stable, classical Rosenbrock method \cite[Section IV.10]{Hairer_book_II}.

\subsection{Lorenz 96}
The nonlinear test is carried out with the Lorenz-96 model \cite{Lorenz1996}.  This chaotic model has $N=40$ states, periodic boundary conditions, and is described by the following equations:
\begin{eqnarray}
\label{LorenzModel}
\frac{dy_j}{dt} &=& -y_{j-1}\; \left(y_{j-2}-y_{j+1}\right)-y_j + F \;,
\quad j = 1, \ldots, N~,\\
\nonumber
y_{-1} &=& y_{N-1}~, \quad  y_{0} = y_N ~, \quad  y_{N+1} = y_{1}~.
\end{eqnarray}
The forcing term is $F=8.0$, with $t \in [0, \ 0.3]$.

\begin{figure}[htp]
\centering
\includegraphics[width=5in]{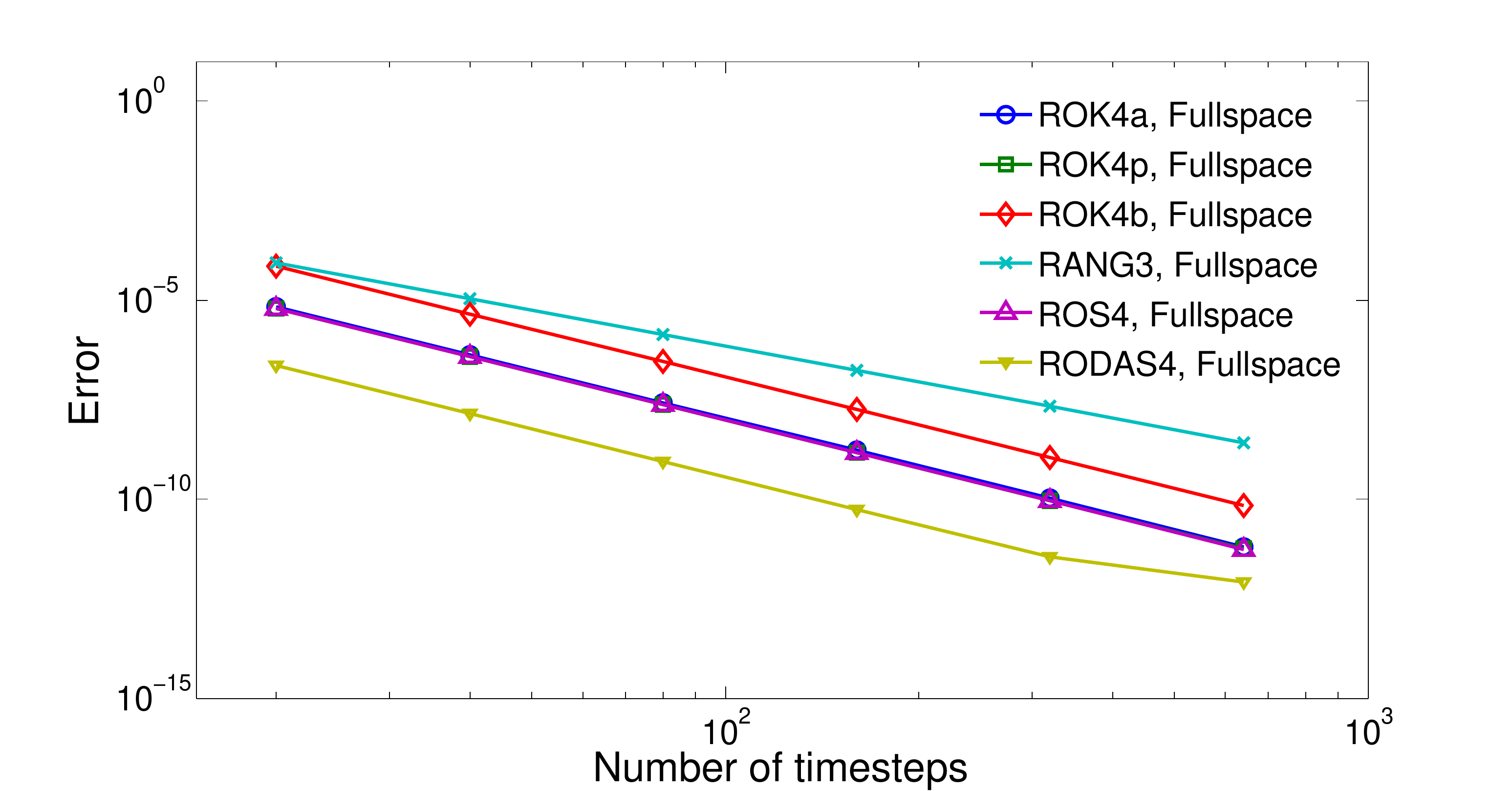} \\
\caption{Precision diagram for Lorenz 96, showing convergence order of methods using a full Jacobian.
}
\label{fig:lorenzconvergence}
\end{figure}

\begin{table}[ht]
\centering
\begin{tabular}{|c|c|c|c|c|c|c|}
\hline
		&	{\sc Rang3} 	&	{\sc Ros4}		&	{\sc Rodas4}	& {\sc Rok4}a	& {\sc Rok4}p	  & {\sc Rok4}b \\
\hline
M =  N 	&	2.99		&	4.01		&	3.99		&	4.01		&	3.99		&	3.99 \\
\hline
M = 4	&	2.99		&	3.03		&	3.05		&	4.01		&	3.98		&	3.99 \\
\hline
\end{tabular}
\caption{Convergence Rates on Lorenz 96.}
\label{table:lorenzconvergence}
\end{table}

Table \ref{table:lorenzconvergence} shows the convergence orders of all methods applied to the Lorenz-96 system, using both the full Jacobian as well as a four dimensional Krylov approximation of the Jacobian.  Figure \ref{fig:lorenzconvergence} verifies numerically the theoretical order results for all methods using the full Jacobian.

Recall that all methods satisfying the classical Rosenbrock order conditions are also Rosenbrock-K methods of at least order three.  Table \ref{table:lorenzconvergence} shows this property, where the third order method {\sc Rang3} maintains its order and both fourth order methods, {\sc Ros4} and {\sc Rodas4}, reduce to third order while using the approximate Jacobian.

\begin{figure}[htp]
\centering
\includegraphics[width=5in]{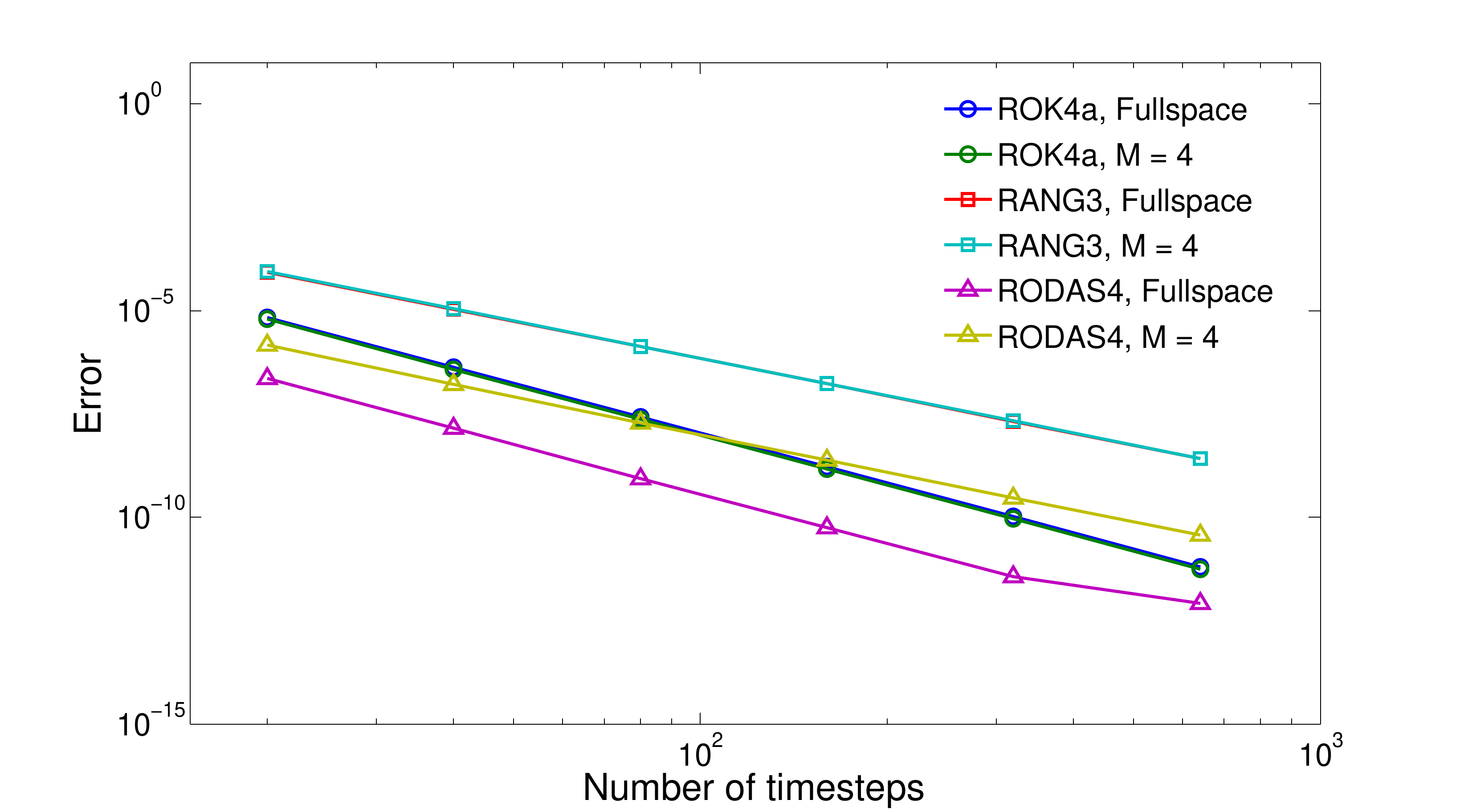}\\
\caption{Precision diagram showing the convergence order of {\sc Rok4}a, {\sc Rang3}, and {\sc Rodas4} using the full and Krylov approximated Jacobian.}
\label{fig:lorenzconvcomp}
\end{figure}

\subsection{Dissipative Burger's equation}
We apply the newly derived methods to an ODE system coming from a semi-discretization of a partial differential equation using the method of lines.  The dissipative Burger's equation is a one-dimensional PDE described by 
\begin{equation}
	\label{eqn:burger}
	\frac{du}{dt} + \frac{d}{dx}\left(\frac{1}{2}u^2\right) = \varepsilon\frac{d^2u}{dx^2}, \quad  x \in [0,10], \quad  t \in [0, \  0.5], \quad \varepsilon = .001\,,
\end{equation}
with homogeneous boundary conditions, and initial condition 
\[
u(x,t=0) = \frac{1}{6}\sin^2\, \left(\frac{1}{5}\pi x\right)\, \left(1-x^2\right)\,, \quad \varepsilon = .001\,.
\]
The spatial discretization is a Nodal Discontinuous Galerkin method using equispaced fourth-order elements, making use of the code base provided for \cite{HesthavenWarburton2008}.

Figures \ref{fig:burgerprecision50} and \ref{fig:burgerprecision2000} show a performance comparison of the newly proposed methods with both {\sc Ros4} and {\sc Rodas4}.  The figures show that for problems of even modest size, Rosenbrock-Krylov methods have comparable efficiency with previously existing methods.  The increase in relative efficiency between Rosenbrock-K methods and the classical Rosenbrock methods as the problem size increases is a good indicator that Rosenbrock-K methods are likely to be much more efficient than full space methods as problem size increases, and the benefits of solving a reduced system become more pronounced.

\begin{figure}[htp]
\centering
\includegraphics[width=5in]{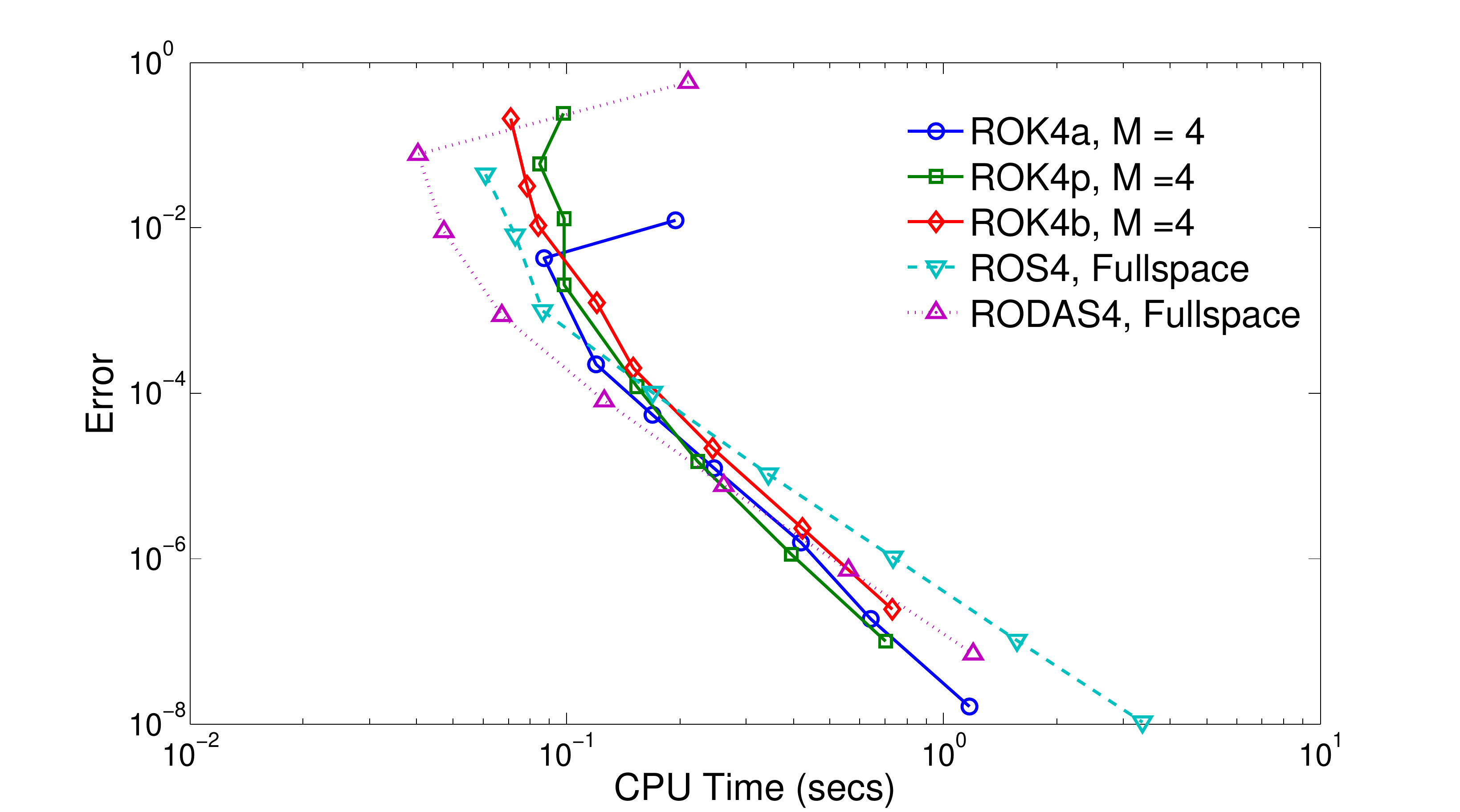}\\
\caption{Precision diagram for Burger's equation using 10 fourth order elements.}
\label{fig:burgerprecision50}
\end{figure}

\begin{figure}[htp]
\centering
\includegraphics[width=5in]{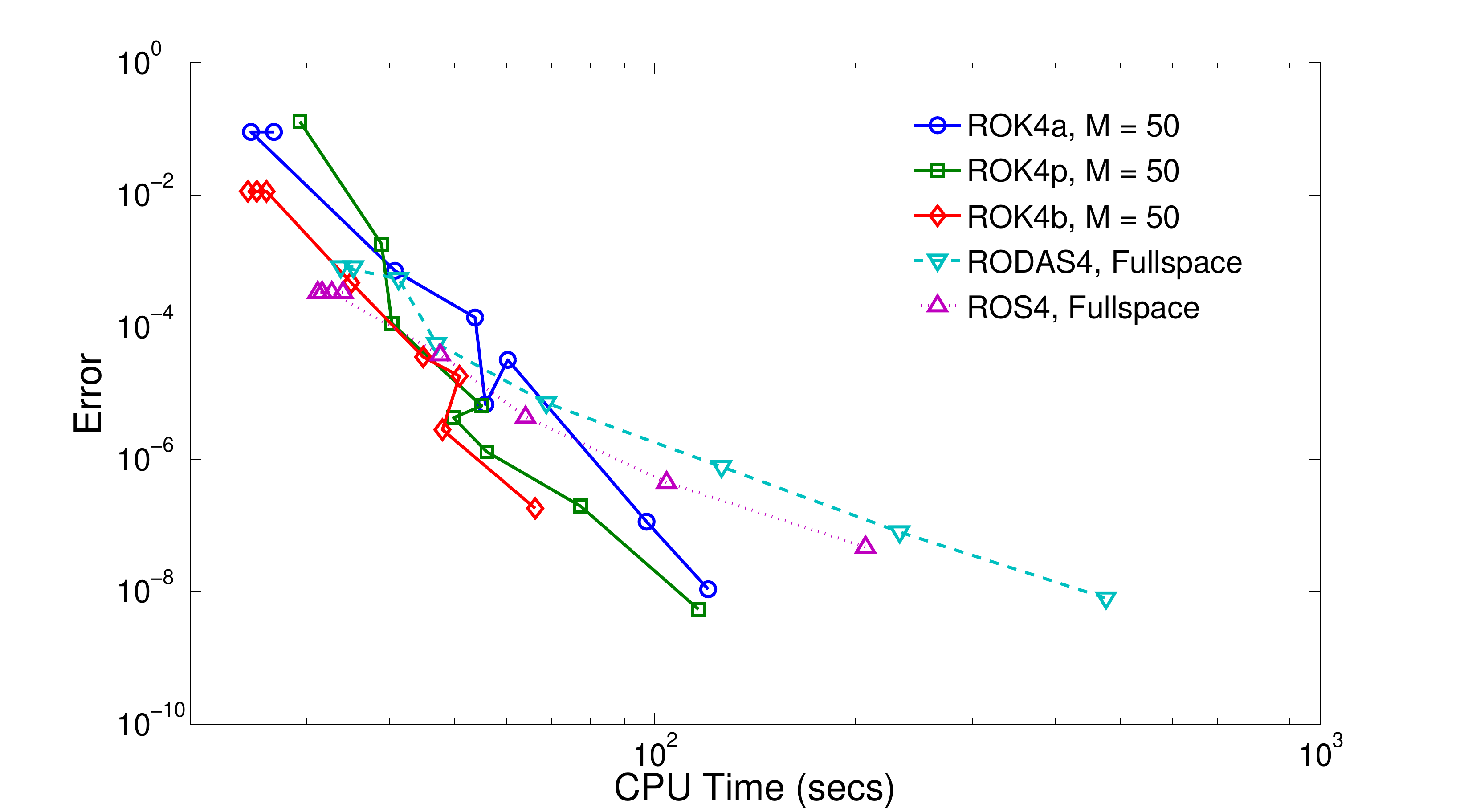}\\
\caption{Precision diagram for Burger's equation using 400 fourth order elements.}
\label{fig:burgerprecision2000}
\end{figure}


\subsection{CBM-IV}

Here we give some results for {\sc ROK} methods applied to a stiff system of ODEs coming from a KPP MATLAB implementation of the CBM-IV model \cite{Gery89photochemicalkinetics}.  This problem is based on the Carbon Bond Mechanism IV (CBM-IV), consisting of 32 chemical species involved in 70 thermal and 11 photolytic reactions \cite{Sandu96benchmarkingstiff}.

\begin{figure}[htp]
\centering
\includegraphics[width=5in]{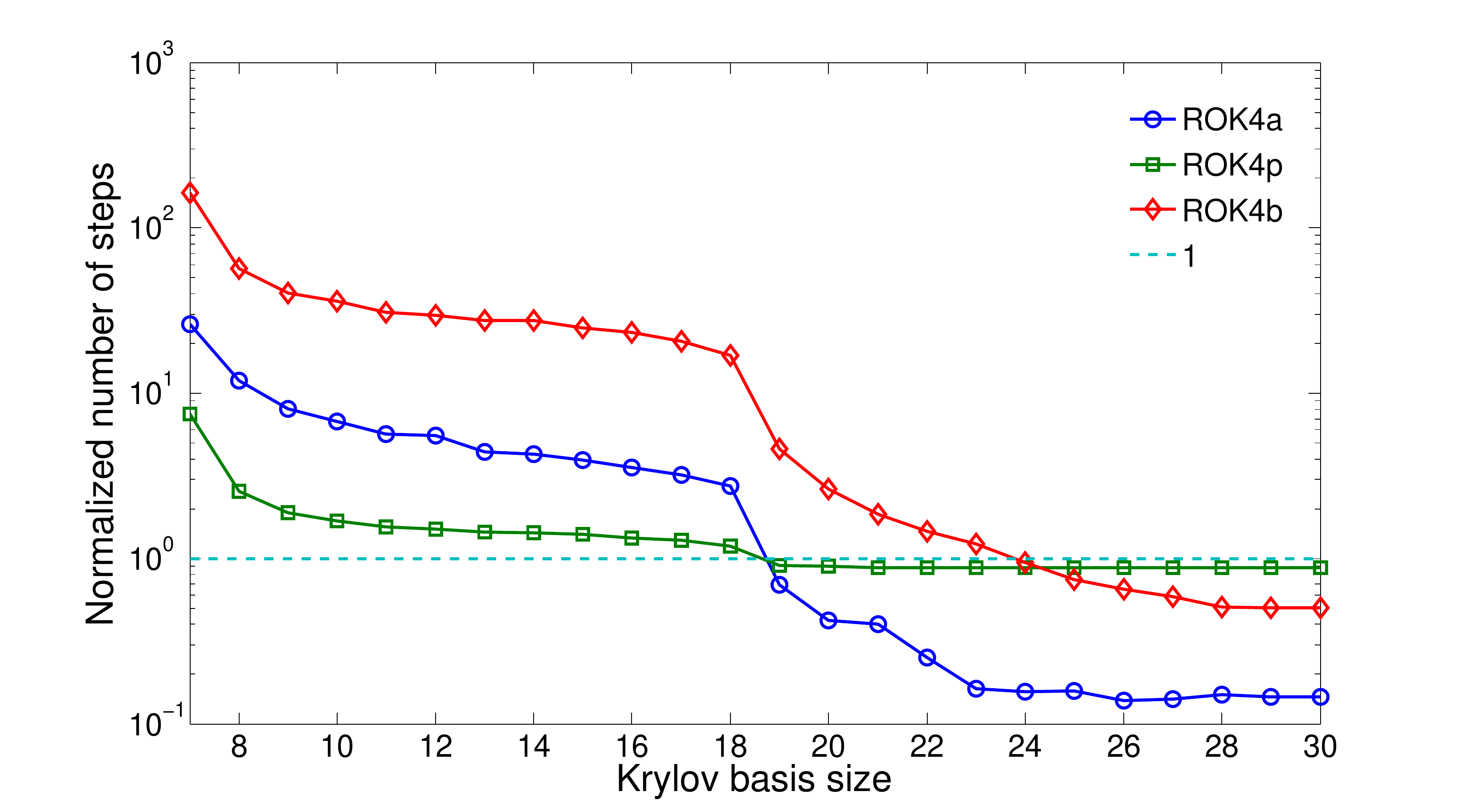}\\
\caption{Number of accepted steps in a full space solution with tolerances of $10^{-2}$ for CBM-IV.}
\label{fig:cbm4}
\end{figure}

While CBM-IV is a perfect example of a problem for which Rosenbrock-K methods are a poor choice, due to its small size and relatively large number of stiff variables, it does allow us to illustrate numerically the relationship between stability and choice of Krylov basis size. Figure \ref{fig:cbm4} shows the number of timesteps, normalized to a full space solution of the respective method, required to obtain a reasonable solution in a single day simulation of the CBM-IV model.  The number of timesteps required for a full space solution are given in Table \ref{table:cbm4}.

\begin{table}[ht]
\begin{center}
$\begin{array}{|c|c|c|}
\hline
\textrm{{\sc Rok4}a}	& \textrm{{\sc Rok4}p}	  & \textrm{{\sc Rok4}b} \\
\hline
1301 & 10270 & 261 \\
\hline
\end{array}$
\caption{Number of required timesteps in a full space solution with tolerances of $10^{-2}$.}
\label{table:cbm4}
\end{center}
\end{table}

We see from this figure that for small Krylov basis sizes, Rosenbrock-K methods are unstable.  However, as the size of the Krylov space nears the size of the full space the behavior of the Rosenbrock-K methods approaches that of the full space method.

\subsection{Shallow water equations}

We examine the relative performance of the methods on the shallow water equations \cite{Liska97compositeschemes}.
\begin{subequations}
\label{eqn:shallowwater}
\begin{eqnarray}
 \frac{\partial}{\partial t} h + \frac{\partial}{\partial x} (uh) + \frac{\partial}{\partial y} (vh) &=& 0 \\
 \frac{\partial}{\partial t} (uh) + \frac{\partial}{\partial x} \left(u^2 h + \frac{1}{2} g h^2\right) + \frac{\partial}{\partial y} (u v h) &=& 0  \\
 \frac{\partial}{\partial t} (vh) + \frac{\partial}{\partial x} (u v h) + \frac{\partial}{\partial y} \left(v^2 h + \frac{1}{2} g h^2\right) &=& 0, 
\end{eqnarray}
\end{subequations}
with reflective boundary conditions, where $u(x,y,t)$, $v(x,y,t)$ are the flow velocity components and $h(x,y,t)$ is the fluid height.
After spatial discretization using centered finite differences on a $32 \times 32$ grid the system \eqref{eqn:shallowwater} is brought to the standard ODE form  (\ref{eqn:ode}) with
\begin{equation*}
y = \left[ u \, \, v \, \, h\right]^T \in \R^{N}, \quad f_y(t,y) = \mathbf{J} \in \R^{N \times N}, ~~ N = 3072.
\end{equation*}
For all experiments we report only the time discretization errors, calculated against a reference solution computed by MATLAB's ODE15s solver with 
absolute and relative tolerances set to $10^{-12}$.

Figure \ref{fig:swe} gives an efficiency comparison of the Rosenbrock-K and classical Rosenbrock methods, all methods make use of a sparse Jacobian matrix.  This problem illustrates the scalability of the Rosenbrock-K methods, when the stiff subspace of the problem is kept relatively small.  Here there are $3072$ state variables, but the Rosenbrock-K methods require only eight basis vectors for stability, and so the cost of computing the Krylov space and solving the small system is much smaller than solving the linear system in the full space.

\begin{table}[ht]
\begin{center}
$\begin{array}{|c|c|c|c|}
\hline
 				&	\textrm{{\sc Rok4}a}	& \textrm{{\sc Rok4}p}		& \textrm{{\sc Rok4}b} \\
\hline
\textrm{Fullspace}	&	3.85					& 3.87					& 3.94 \\
\hline
\textrm{Exact Jacobian}	&	3.86			& 3.88					& 3.94 \\
\hline
\textrm{Finite-difference} 	& 3.86			& 3.88					& 3.94 \\
\hline
\textrm{Low accuracy finite-difference} & 1.99		& 0.99					& 2.01 \\
\hline
\end{array}$
\caption{Convergence rates for shallow water equations.}
\label{table:swe-conv}
\end{center}
\end{table}

Table \ref{table:swe-conv} shows the convergence rates of the three {\sc Rok} methods applied with a constant timestep to solve the shallow water problem. 
We consider the exact Jacobian implementation as well as reduced space approximations of the Jacobian. We compute the Krylov space approximations 
using exact Jacobian-vector products and using finite difference approximations. In order to assess the impact of the finite difference errors we consider both an accurate implementation of the Jacobian-vector products, where the increment $\delta$ in \eqref{eqn:jacvec} is carefully chosen, as well as an
inaccurate implementation where a large, fixed value of $\delta$ is used.

\begin{figure}[htp]
\centering
\includegraphics[width=5in]{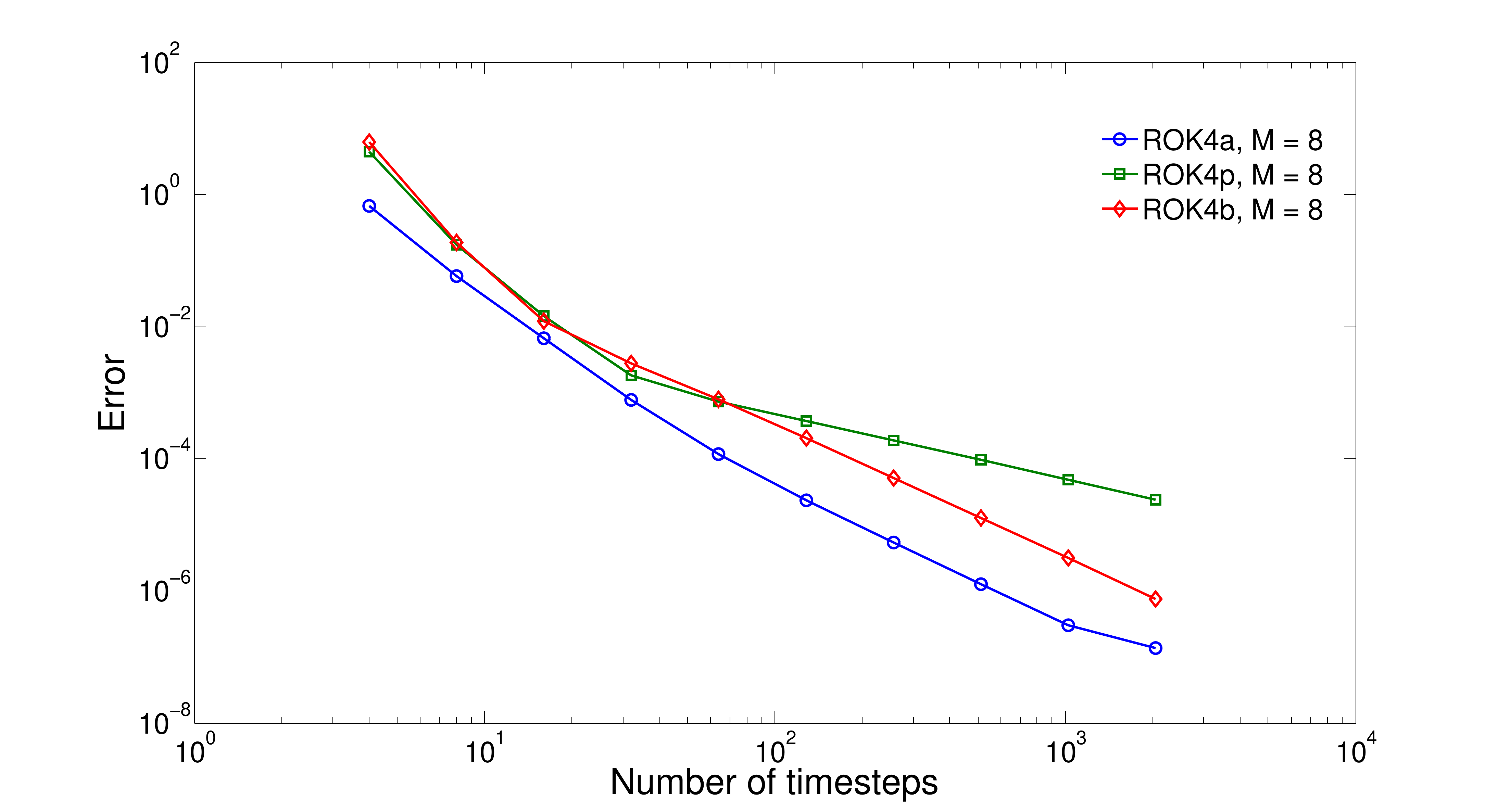}\\
\caption{Precision diagram for different methods on the shallow water equations problem. The lower order convergence rates are due to the low accuracy finite-difference approximation of Jacobian-vector products.}
\label{fig:swebad}
\end{figure}

To verify the results presented in section \ref{sec:finite-diff} we perform a convergence study for the three {\sc Rok} methods using a low accuracy finite-difference approximation of the Jacobian-vector products applied to the shallow water equations. Results are presented in Figure \ref{fig:swebad}, The three curves show a distinct change in slope.  When the finite-difference error is small compared to the timestep the methods have a convergence orders $3.4$--$3.7$.  When the finite-difference error becomes large compared to the timestep  {\sc Rok4}a and {\sc Rok4}b show second-order and {\sc Rok4}p shows only first-order.  The difference in order when the finite-difference approximation is poor can be explained as follows. {\sc Rok4}a and {\sc Rok4}b satisfy the second-order W condition of Figure \ref{fig:ROWtrees} and have the error term \eqref{eqn:ROW-2}  equal nonzero, while {\sc Rok4}p does not satisfy this additional condition. Rosenbrock-W methods are preferred when finite-differences are the only 
option for obtaining Jacobian-vector products, and these products cannot be obtained accurately.

\begin{figure}[htp]
\centering
\includegraphics[width=5in]{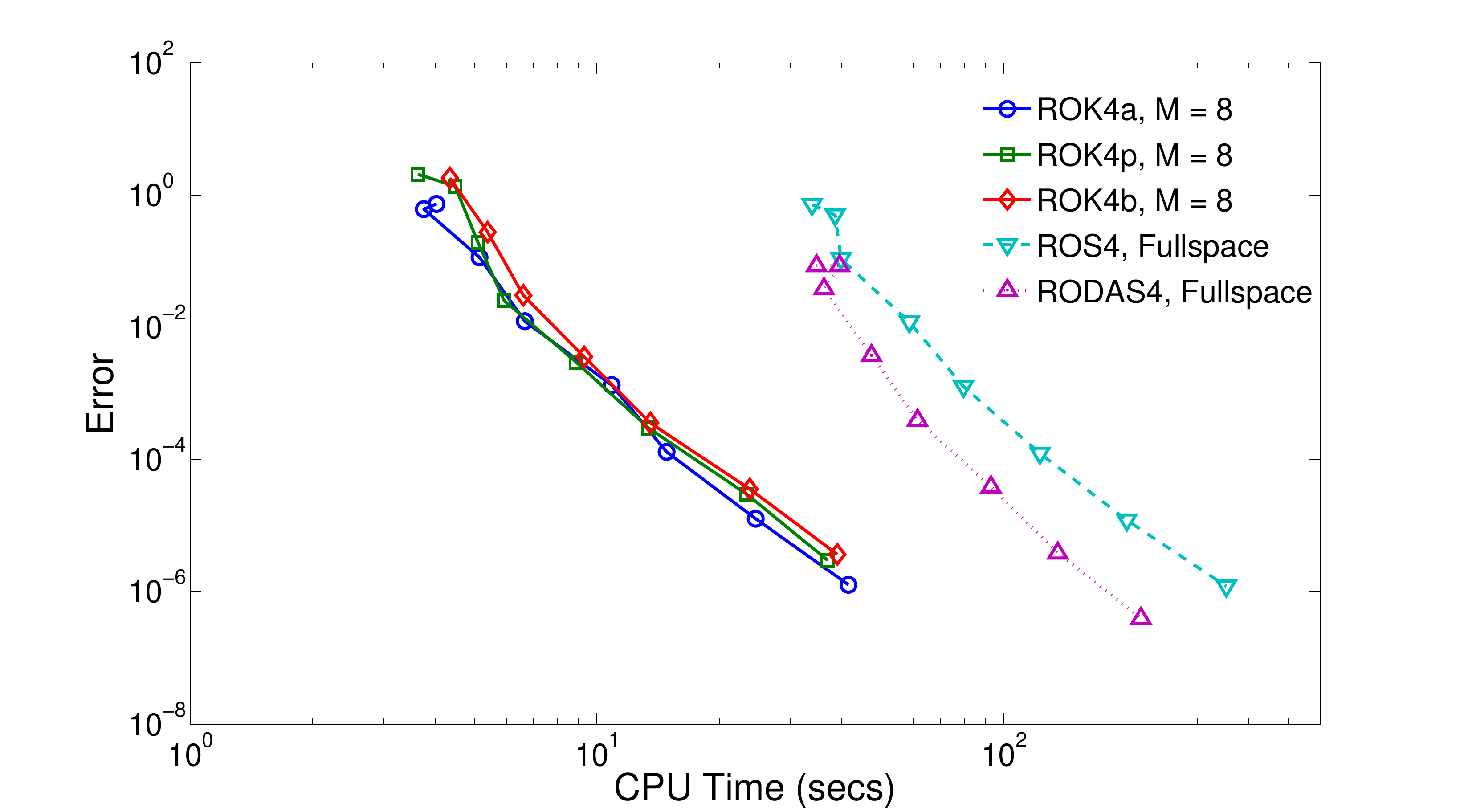}\\
\caption{Precision diagram for the shallow water equations.}
\label{fig:swe}
\end{figure}

With the small basis size requirements in mind we explore in Figure \ref{fig:sweERK} the relative difference in cost, measured by the number of right hand side evaluations, between an explicit Runge-Kutta method and matrix-free Rosenbrock-K methods.  Figure \ref{fig:sweERK} shows the number of function evaluations on the $x$-axis and the Error of the resulting solution on the $y$-axis.  The number of function evaluations for {\sc Rok}4a includes those required to compute the matrix-free Jacobian-vector products in the Arnoldi iteration.  For the shallow water equations we see that Rosenbrock-K methods perform well against the explicit Runge-Kutta method, when low accuracy is desired and the CFL condition begins to constrain the explicit method.

\begin{figure}[htp]
\centering
\includegraphics[width=5in]{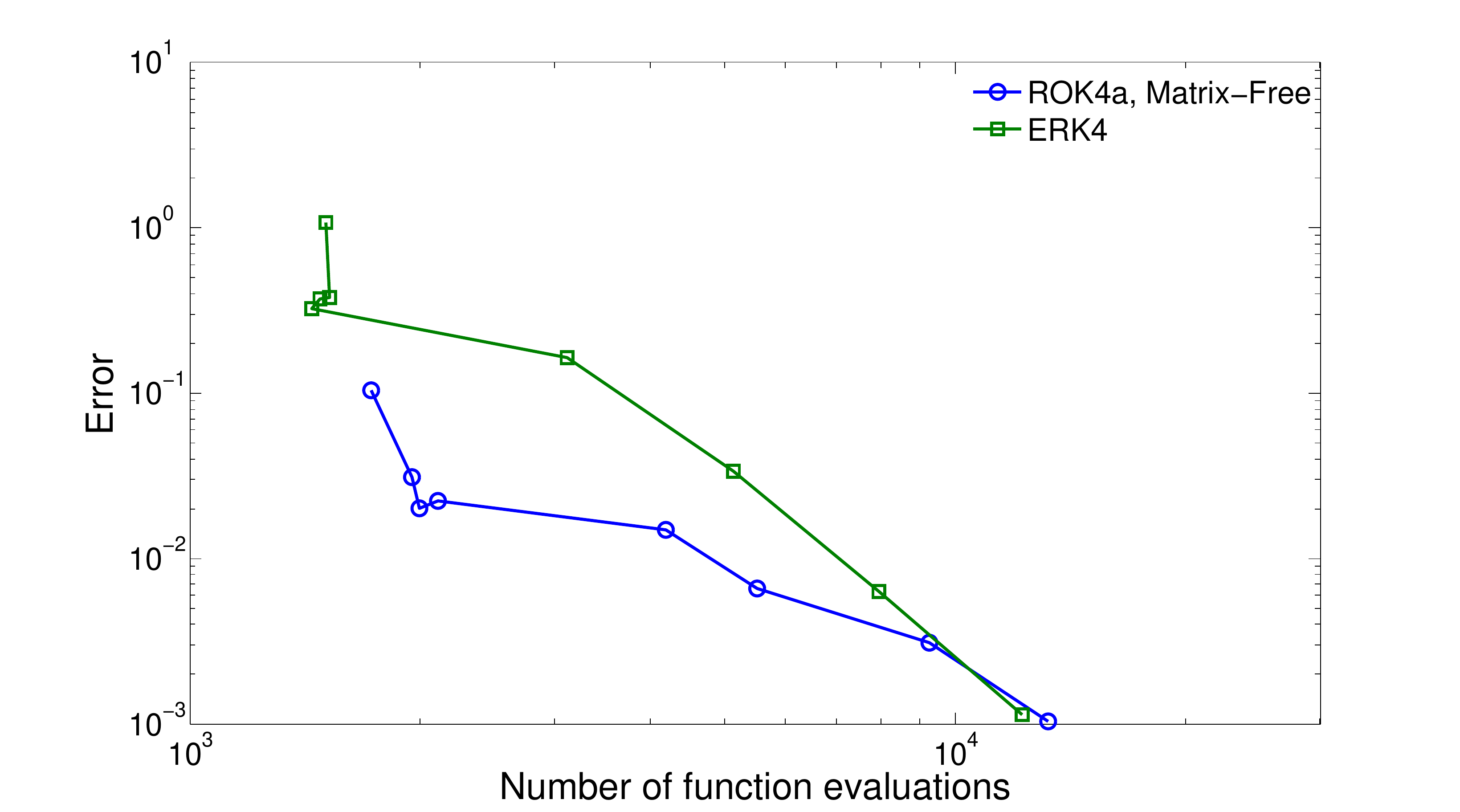}\\
\caption{Comparison of ROKa and ERK4 on the shallow water equations.}
\label{fig:sweERK}
\end{figure}



\section{Conclusions and future work} \label{sec:conclusions}

In this work we have developed a new class of Rosenbrock like integrators, along with a corresponding order condition theory.  We consider the ODE integrator and linear solver as a single computational process to develop methods with the least possible amount of implicitness.

The Rosenbrock-K order conditions remove the requirement for accurate solution of the linear systems which constrain the use of approximate methods in classical Rosenbrock integrators.  For accuracy of the integration process, the size of the Krylov approximation of the Jacobian need be only as large as the desired order of the method.  Stability considerations give stricter requirements on the size of the Krylov basis used, though the exact nature of these requirements is not yet entirely understood and will be the focus of future work.  Some numerical investigation, and a result by Wensch \cite{Wensch_2005_DAE}, give reason to believe that the required size of the Krylov subspace is related to the number of stiff variables in the underlying problem.

The Rosenbrock-K methods developed here have many favorable properties over similar integrators.  Rosenbrock-K methods have substantially fewer order conditions than Rosenbrock-W methods, requiring only a single extra order condition for order four methods as opposed to four extra conditions for order three methods in the case of Rosenbrock-W.  The reduced number of order conditions allows for methods of higher order, we have given conditions up to order five, or for methods with fewer stages.  Further, the structure of Rosenbrock-K methods allows for the computation of a single Krylov subspace at each timestep without the requirement of enriching this space for each internal stage, as is the case for Krylov-ROW methods.

The efficiency of Rosenbrock-K integrators applied to a specific problem is dependent on the stability requirements, and therefore on the stiffness of the underlying problem. We have illustrated this in \cite{Tranquilli13ROK} with the help of  a chemical kinetic test problem. For this reason Rosenbrock-K methods are best suited to very large problems in which there is a relatively small number of stiff variables. However, Rosenbrock-K  methods are expected to perform at least as well as Krylov-ROW methods in all cases.

\section*{Acknowledgements}

This work has been supported in part by NSF through awards NSF
OCI--8670904397, NSF CCF--0916493, NSF DMS--0915047, NSF CMMI--1130667, 
NSF CCF--1218454, AFOSR FA9550--12--1--0293--DEF, AFOSR 12-2640-06,
and by the Computational Science Laboratory at Virginia Tech.

\bibliographystyle{siam}
\bibliography{Master,ode_krylov,pde_time_implicit,ode_general,ode_multirate,ode_exponential}

\begin{thebibliography}{10}

\bibitem{Botchev_2011_Krylov}
{\sc M.~Botchev}, {\em Block {K}rylov subspace exact time integration of linear
  {ODE} systems. part 1: algorithm description}.
\newblock http://arxiv.org/abs/1109.5100, 2011.

\bibitem{Botchev_2001_Krylov}
{\sc M.~Botchev, G.~Sleijpen, and H.~van~der Vors}, {\em Low-dimensional
  {K}rylov subspace iterations for enhancing stability of time-step integration
  schemes}, tech. rep., University Utrecht, 2001.

\bibitem{Bouhamidi_2011_Krylov}
{\sc A.~Bouhamidi and K.~Jbilou}, {\em Stein implicit {R}unge--{K}utta methods
  with high stage order for large-scale ordinary differential equations},
  Applied Numerical Mathematics, 61 (2011), pp.~149--159.

\bibitem{Brown_2008_JFNK}
{\sc P.~Brown, H.~Walker, R.~Wasyk, and C.~Woodward}, {\em On using approximate
  finite differences in matrix-free {Newton-Krylov} methods}, SIAM Journal on
  Numerical Analysis, 46 (2008), pp.~1892--1911.

\bibitem{Brown_1994_DASPK}
{\sc P.~N. Brown, A.~C. Hindmarsh, and L.~R. Petzold}, {\em Using {K}rylov
  methods in the solution of large-scale differential-algebraic systems}, SIAM
  Journal on Scientific Computing, 15 (1994), pp.~1467--1488.

\bibitem{Caliaria_2009_implementation}
{\sc M.~Caliaria and A.~Ostermann}, {\em Implementation of exponential
  {R}osenbrock-type integrators}, Applied Numerical Mathematics, 3--4 (2009),
  pp.~568--581.

\bibitem{Dekker_2009_Krylov}
{\sc K.~Dekker}, {\em Partitioned {K}rylov subspace iteration in implicit
  {R}unge--{K}utta methods}, Linear Algebra and its Applications, 431 (2009),
  pp.~488--494.

\bibitem{Evans_2007_JFNK-SIMPLE}
{\sc K.~Evans and D.~Knoll}, {\em Temporal accuracy analysis of phase change
  convection simulations using the {JFNK-SIMPLE} algorithm}, International
  Journal for Numerical Methods in Fluids, 55 (2007).

\bibitem{Gery89photochemicalkinetics}
{\sc M.~W. Gery, G.~Z. Whitten, J.~P. Killus, and M.~C. Dodge}, {\em A
  photochemical kinetics mechanism for urban and regional scale computer
  modeling}, Journal of Geophysical Research: Atmospheres, 94 (1989),
  pp.~12925--12956.

\bibitem{Griewank2000EDP}
{\sc A.~Griewank}, {\em Evaluating Derivatives: Principles and Techniques of
  Algorithmic Differentiation}, no.~19 in Frontiers in Appl. Math., SIAM,
  Philadelphia, PA, 2000.

\bibitem{Grotowskya_1996}
{\sc I.~Grotowskya and J.~Ballmanna}, {\em Efficient time integration of
  {N}avier--{S}tokes equations}, Computers \& Fluids, 28 (1996), pp.~243--263.

\bibitem{Hairer_book_I}
{\sc E.~Hairer, S.~Norsett, and G.~Wanner}, {\em Solving Ordinary Differential
  Equations {I}: Nonstiff Problems}, Springer, 2008.

\bibitem{Hairer_book_II}
{\sc E.~Hairer and G.~Wanner}, {\em Solving Ordinary Differential Equations
  {II}: Stiff and Differential-Algebraic Problems}, Springer, 2002.

\bibitem{HesthavenWarburton2008}
{\sc J.~S. Hesthaven and T.~Warburton}, {\em Nodal Discontinuous Galerkin
  Methods}, Texts in Applied Mathematics, Springer, 2008.

\bibitem{Hindmarsh_1991_LSODKR}
{\sc A.~Hindmarsh}, {\em {LSODKR}, stiff ordinary differential equations
  ({ODE}) system solver with {K}rylov iteration with rootfinding}, 1991.

\bibitem{Hindmarsh_1991_LSODPK}
{\sc A.~Hindmarsh and P.~Brown}, {\em {LSODPK}, ordinary differential equations
  solver for stiff and nonstiff system with {K}rylov corrector iteration},
  1991.

\bibitem{Hindmarsh_2002_VODPK}
{\sc A.~Hindmarsh, P.~Brown, and G.~Byrne}, {\em {VODPK}:
  {V}ariable-coefficient ordinary differential equation solver with the
  {P}reconditioned {K}rylov method {GMRES} for the solution of linear systems}.
\newblock http://www.netlib.org/ode/vodpk.f, 2002.

\bibitem{Hochbruck_1997_exp}
{\sc M.~Hochbruck and C.~Lubich}, {\em On {K}rylov subspace approximations to
  the matrix exponential operator}, SIAM Journal on Numerical Analysis, 34
  (1997), pp.~1911--1925.

\bibitem{Hochbruck_1998_exp}
{\sc M.~Hochbruck, C.~Lubich, and H.~Selhofer}, {\em Exponential integrators
  for large systems of differential equations}, SIAM Journal of Scientific
  Computing, 19 (1998), pp.~1552---1574.

\bibitem{Hochbruck_2010_exp}
{\sc M.~Hochbruck and A.~Ostermann}, {\em Exponential integrators}, Acta
  Numerica, 19 (2012), pp.~209--286.

\bibitem{Hochbruck_2009_exp}
{\sc M.~Hochbruck, A.~Ostermann, and J.~Schweitzer}, {\em {E}xponential
  {R}osenbrock-type methods}, {SIAM Journal on Numerical Analysis}, 47 (2009),
  pp.~786--803.

\bibitem{Hosseini_1999_MEBDF}
{\sc S.~Hosseini and G.~Hojjati}, {\em Matrix free {MEBDF} method for the
  solution of stiff systems of {ODE}s}, Mathematical and Computer Modelling, 29
  (1999), pp.~67--77.

\bibitem{Minion_2006_accelerated}
{\sc J.~Huang, J.~Jia, and M.~Minion}, {\em Accelerating the convergence of
  spectral deffered correction methods}, {Journal of Computational Physics},
  214 (2006), pp.~633--656.

\bibitem{Minion_2005_SD-Krylov}
{\sc J.~Huang, J.~Jia, and M.~Minion}, {\em Arbitrary order {K}rylov deferred
  correction methods for differential algebraic equations}, Journal of
  Computational Physics, 221 (2007), pp.~739--760.

\bibitem{Minion_2007_dae}
{\sc J.~Huang, J.~Jia, and M.~Minion}, {\em Arbitrary order {K}rylov deffered
  corrction methods for differential algebraic equations}, {Journal of
  Computational Physics}, 221 (2007), pp.~739--760.

\bibitem{Jay_1999_SPARK}
{\sc L.~Jay}, {\em A parallelizable preconditioner for the iterative solution
  of implicit {R}unge--{K}utta-type methods}, Journal of Computational and
  Applied Mathematics, 111 (1999), pp.~63--76.

\bibitem{Kelley_2004_Krylov}
{\sc C.~Kelley, I.~Kevrekidis, and L.~Qiao}, {\em Newton-{K}rylov solvers for
  time-steppers}.
\newblock arXiv:math/0404374v1, 2004.

\bibitem{Keyes_2004_JFNK}
{\sc D.~Knoll and D.~Keyes}, {\em Jacobian-free {N}ewton--{K}rylov methods: a
  survey of approaches and applications}, Journal of Computational Physics, 193
  (2004), pp.~357--397.

\bibitem{Liska97compositeschemes}
{\sc R.~Liska and B.~Wendroff}, {\em Composite schemes for conservation laws},
  1997.

\bibitem{Lorenz1996}
{\sc E.~N. Lorenz}, {\em Predictability -- a problem partly solved}, in
  Predictability of Weather and Climate, Cambridge University Press, 2006.

\bibitem{Lubich_1995_linearlyImplicit}
{\sc C.~Lubich and A.~Ostermann}, {\em Linearly implicit time discretization of
  nonlinear parabolic equations}, IMA Journal on Numerical Mathematics, 15
  (1995), pp.~555--583.

\bibitem{Novati_2008_secantROW}
{\sc P.~Novati}, {\em {Some secant approximations for Rosenbrock W-methods}},
  Applied Numerical Mathematics, 58 (2008), pp.~195--211.

\bibitem{Podhaisky_1997_krylovW}
{\sc H.~Podhaisky, R.~Weiner, and B.~Schmitt}, {\em Numerical experiments with
  {K}rylov integrators}, Applied Numerical Mathematics, 28 (1997),
  pp.~413--425.

\bibitem{Rang_2005_ROW3}
{\sc J.~Rang and L.~Angermann}, {\em New {R}osenbrock {W}-methods of order 3
  for partial differential algebraic equations of index 1}, BIT Numerical
  Mathematics, 45 (2005), pp.~761--787.

\bibitem{Weiner_1997_rowmap}
{\sc R.Weiner, B.~Schmitt, and H.~Podhaisky}, {\em {ROWMAP}--a {ROW}-code with
  {K}rylov techniques for large stiff {ODE}s}, Applied Numerical Mathematics,
  25 (1997), pp.~303--319.

\bibitem{Saad}
{\sc Y.~Saad}, {\em Iterative methods for sparse linear systems}, PWS Pub. Co.,
  Boston, 1996.

\bibitem{Sandu96benchmarkingstiff}
{\sc A.~Sandu, J.~Verwer, M.~V. Loon, G.~Carmichael, F.~Potra, D.~Dabdub, and
  J.~Seinfeld}, {\em Benchmarking stiff ode solvers for atmospheric chemistry
  problems-i. implicit vs explicit}, Atmospheric Environment, 31 (1997),
  pp.~3151 -- 3166.
\newblock <ce:title>EUMAC: European Modelling of Atmospheric Constituents.

\bibitem{Schmitt_1995_krylovW}
{\sc B.~Schmitt and R.~Weiner}, {\em {Matrix free W-methods using a multiple
  Arnoldi iteration}}, Applied Numerical Mathematics, 18 (1995), pp.~307--320.

\bibitem{Schmitt_1998_precondition}
\leavevmode\vrule height 2pt depth -1.6pt width 23pt, {\em Polynomial
  preconditioning in {K}rylov-{ROW}-methods}, Applied Numerical Mathematics, 28
  (1998), pp.~427--437.

\bibitem{Schulze_2008_exp}
{\sc J.~C. Schulze, P.~J. Schmid, and J.~L. Sesterhenn}, {\em Exponential time
  integration using {K}rylov subspaces}, International Journal for Numerical
  Methods in Fluids, 60 (2008), pp.~561--609.

\bibitem{Strehmel_1991_ROW-stiff}
{\sc K.~Strehmel, R.~Weiner, and M.~B\"{u}ttner}, {\em Order results for
  {R}osenbrock type methods on classes of stiff equations}, Numerische
  Mathematik, 59 (1991), pp.~723--737.

\bibitem{Tokman_2006_EPI}
{\sc M.~Tokman}, {\em Efficient integration of large stiff systems of {ODE}s
  with exponential propagation iterative ({EPI}) methods}, Journal of
  Computational Physics, 213 (2006), pp.~748--776.

\bibitem{Tranquilli13ROK}
{\sc P.~{Tranquilli} and A.~{Sandu}}, {\em {R}osenbrock-{K}rylov methods for
  large systems of differential equations}, ArXiv e-prints,  (2013).

\bibitem{vanderVorst}
{\sc H.~A. van~der Vorst}, {\em Iterative Methods for Large Linear Systems},
  Cambridge University Press, 2003.

\bibitem{Weiner_1995_krylovW}
{\sc R.~Weiner and B.~Schmitt}, {\em {Consistency of Krylov-W-methods in
  initial value problems}}, Tech. Rep.~14, Fachbereich Mathematik und
  Informatik, Martin-Luther-Universitat Halle-Wittenberg, 1995.

\bibitem{Weiner_1998_order}
\leavevmode\vrule height 2pt depth -1.6pt width 23pt, {\em Order results for
  {Krylov-W} methods}, Computing, 61 (1998), pp.~69--89.

\bibitem{Wensch_2005_DAE}
{\sc J.~Wensch}, {\em {K}rylov-{ROW} methods for {DAEs} of index 1 with
  applications to viscoelasticity}, Applied Numerical Mathematics, 53 (2005),
  pp.~527--541.

\end{thebibliography}
\end{document}